\newtheorem{theorem}{Theorem}[section]
\newtheorem{lemma}[theorem]{Lemma}
\theoremstyle{definition}
\newtheorem{definition}[theorem]{Definition}
\newtheorem{example}[theorem]{Example}
\theoremstyle{remark}
\newtheorem{remark}[theorem]{Remark}
\theoremstyle{notation}
\newtheorem{notation}[theorem]{Notation}
\theoremstyle{proposition}
\newtheorem{proposition}[theorem]{Proposition}
\theoremstyle{corollary}
\newtheorem{corollary}[theorem]{Corollary}
\numberwithin{equation}{section}
\begin{document}

\title{Cohomology of restricted Lie-Rinehart algebras and the Brauer group}


\author{Ioannis Dokas}
\address{}
\email{dokas@ucy.ac.cy}

\subjclass{17B63, 17A32, 18G60, 18G15}



\keywords{Restricted Lie algebra, Lie-Rinehart algebra, Brauer
group, Quillen-Barr-Beck cohomology.}

\begin{abstract}
We give an interpretation of the Brauer group of a purely
inseparable extension of exponent $1$, in terms of restricted
Lie-Rinehart cohomology. In particular, we define and study the
category $p$-$\rm{LR}(A)$ of restricted Lie-Rinehart algebras over a
commutative algebra $A$. We define cotriple cohomology groups
$H_{p-LR}(L,M)$ for $L\in p$-$\rm{LR}(A)$ and $M$ a Beck $L$-module.
We classify restricted Lie-Rinehart extensions. Thus, we obtain a
classification theorem for regular extensions considered by
Hoshschild.
\end{abstract}

\maketitle





\section*{\textbf{Introduction}}

In classical theory of simple algebras it is known that if $E/F$ is
a Galois extension of fields, then the Brauer group
$\mathcal{B}_{F}^{E}$ of this extension is isomorphic with the group
of equivalence classes of group extensions of the Galois group
$Gal(E/F)$ by the multiplicative group $F^{*}$ of $F$. Moreover, we
have an isomorphism of groups relating the Galois cohomology and the
Brauer group:
\begin{equation}
\mathcal{B}_{F}^{E}\simeq H^{2}(Gal(E/F),F^{*})
\end{equation}
In the context of purely inseparable extensions of exponent $1$
there is a Galois theory due to Jacobson (see \cite{Jac2}). The role
of Galois group is now played by the group of derivations. Purely
inseparable extensions occur naturally in algebraic geometry. In
particular, such extensions appear in the theory of elliptic curves
in prime characteristic. If $k$ is a field such that $char\,k=p$ and
$V$ an algebraic variety over $k$ of dimension greater than $0$,
then the function field $k(V)$ is a purely inseparable extension
over the subfield $k(V)^{p}$ of $pth$ powers.

Hochschild in \cite{Hoch2} proves that the Brauer group
$\mathcal{B}_{k}^{K}$ of purely inseparable extension $K/k$ of
exponent $1$, is isomorphic with the set of equivalence classes of
\textit{regular extensions} of restricted Lie algebras of
$Der_{k}(K)$ by $K$. We remark that the second Hochschild cohomology
group $H_{Hoch}^{2}(L,M)$ where $L$ is a restricted Lie algebra and
$M$ a restricted Lie -module classifies abelian extensions such that
$M^{[p]}=0$. This implies that, there is not an isomorphism between
the Brauer group $\mathcal{B}_{k}^{K}$ and a subgroup of the
cohomology group $H_{Hoch}^{2}(Der_{k}(K),K)$. This remark has been
the motivation to undertake this research. In order to classify
regular extensions and obtain the analogue to $(0.1)$ cohomological
interpretation of the Brauer group, we are led to define and study
the category of restricted Lie-Rinehart algebras. The concept of
Lie-Rinehart algebra is the algebraic counterpart of the notion of
Lie algebroid (see \cite{Macke}). It seems that the notion of
Lie-Rinehart algebra appears first under the name
\textit{pseudo-alg\`{e}bre de Lie} in the paper \cite{He} of Herz.
Also, the notion has been examined by Palais under the name
\textit{d-ring}. The first thorough study of the notion has been
done by Rinehart in \cite{Rin}. Rinehart is the first who defined
cohomology groups for the category of Lie-Rinehart algebras with
coefficients in a Lie-Rinehart module and further developments has
been done by Huebschmann in \cite{Hue}. Moreover, cotriple
cohomology for the category of Lie-Rinehart algebras has been
defined in \cite{Ca} by Casas- Lada- Pirashvili. Besides, the notion
of Lie-Rinehart algebra is closely related to the notion of Poisson
algebra. Loday-Vallette in \cite{LV} remarked that a Lie-Rinehart
algebra is a Poisson algebra. Thus, all the constructions and
properties of Poisson algebras apply to Lie-Rinehart algebras.

In Section $1$ we define the category $p$-$\rm{LR}(A)$ of restricted
Lie-Rinehart algebras over a commutative algebra $A$. We give
examples which occur naturally. In Section $2$ we introduce the
notion of restricted enveloping algebra and restricted Lie-Rinehart
module. We prove a Poincar\'{e}-Birkhoff-Witt type theorem for the
category of restricted Lie-Rinehart algebras. There is a notion of
free Lie-Rinehart algebra made explicit by Casas-Pirashvili (see
\cite{Ca}) and Kapranov (see \cite{Kap}). We extend this notion for
the category $p$-$\rm{LR}(A)$ and we construct free functor left
adjoint to the forgetful functor. In Section $3$ following the
general scheme of Quillen-Barr-Beck cohomology theory for universal
algebras, we determine the Beck modules and Beck derivations. In
Section $4$ we define cohomology groups $H^{*}_{p-LR(A)}(L,M)$ for
$L\in p$-$\rm{LR}(A)$ and $M$ a Beck $L$-module. We prove that
Quillen-Barr-Beck cohomology for $p$-$\rm{LR}(A)$ classifies
extensions of restricted Lie-Rinehart algebras. \textit{Regular
extensions} considered by Hochschild are restricted Lie-Rinehart
extensions. As a consequence in Section $5$, we prove that if $K/k$
is purely inseparable extension of exponent $1$, there is an
isomorphism of groups
$$\mathcal{B}_{k}^{K}\simeq H_{p-LR}^{1}(Der_{k}(K),K)$$

\section{Restricted Lie-Rinehart algebras}

In many cases when we study Lie algebras over a field of prime
characteristic we are led to consider a richer structure than an
ordinary Lie algebra. Indeed the notion of a Lie algebra has to be
replaced by the notion of restricted Lie algebra introduced by N.
Jacobson in \cite{Jac}. Let us recall the definition.

\begin{definition}
A \textit{restricted Lie algebra} $(L,(-)^{[p_{L}]})$ over a field
$k$ of characteristic $p\neq 0$ is a Lie algebra $L$ over $k$
together with a map $(-)^{[p_{L}]}: L \rightarrow L$ called the
$p$-map such that the following relations hold:

\begin{align}
(\alpha x)^{[p_{L}]} &=  \alpha^{p}\;x^{[p_{L}]}\\
[x,y^{[p_{L}]}]     &= [x,\underbrace{y],y],\cdots,y}_{p}] \\
(x+y)^{[p_{L}]} &=
x^{[p_{L}]}+y^{[p_{L}]}+\sum_{i=1}^{p-1}s_{i}(x,y)
\end{align}
where $i s_{i}(x,y)$ is the coefficient of $\lambda^{i-1}$ in
$ad^{p-1}_{\lambda x+y}(x)$, where $ad_{x}: L\rightarrow L$ denotes
the adjoint representation given by $ad_{x}(y):=[y,x]$ and $x,y \in
L,\; \alpha \in k$. We denote by $RLie$ the category of restricted
Lie algebras over $k$.

A Lie-module $A$ over a restricted Lie algebra $(L, (-)^{[p]})$ is
called \textit{restricted} if $x^{[p]}m=(\underbrace{x\cdots
(x(x}_{p}m)\cdots)$
\end{definition}

\begin{example}
Let $R$ be any associative algebra over a field $k$ with
characteristic $p\neq 0$. We denote by $R_{Lie}$ the induced Lie
algebra with the bracket given by $[x,y]:=xy-yx$, for all $x,y\in
R$. Then $(R,(-)^{p})$ is a restricted Lie algebra where $(-)^{p}$
is the Frobenious map given by $x\mapsto x^{p}$. Thus, there is a
functor $(-)_{RLie}: \rm{As} \rightarrow \rm{RLie}$ from the
category of associative algebras to the category of restricted Lie
algebras.
\end{example}

\begin{proposition}
Let $L$ be a restricted Lie algebra and $A$ a restricted $L$-module. If we
denote by $A\oplus L$ the direct sum of the underlying vectors
spaces of $A$ and $L$ then $A\oplus L$ is endowed with the
structure of restricted Lie-algebra.
\end{proposition}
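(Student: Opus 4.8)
The plan is to realize $A \oplus L$ as the split (semidirect) extension of $L$ by the abelian ideal $A$, and to produce the $p$-map via Jacobson's existence criterion rather than by guessing a formula and grinding through the additivity relation (1.3). First I would equip $A \oplus L$ with the bracket
$$[(a,x),(b,y)] := (x\cdot b - y\cdot a,\; [x,y]),$$
where $x\cdot b$ denotes the $L$-module action of $x$ on $b$. That this is a Lie bracket (antisymmetry and Jacobi) is the standard semidirect-product check and uses only the module axioms for $A$: here $A=A\oplus 0$ is an abelian ideal, $L=0\oplus L$ is a subalgebra, and $L$ acts on $A$ by derivations of the (zero) bracket on $A$.

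For the restricted structure I would invoke Jacobson's theorem: if a Lie algebra possesses a basis $(e_\alpha)$ together with elements $f_\alpha$ satisfying $(\operatorname{ad}e_\alpha)^p=\operatorname{ad}f_\alpha$, then there is a unique $p$-semilinear map sending $e_\alpha\mapsto f_\alpha$ and making the algebra restricted. So I would take a basis of $A\oplus L$ assembled from a basis $(a_j,0)$ of $A$ and a basis $(0,e_i)$ of $L$, and prescribe $(a_j,0)^{[p]}:=0$ together with $(0,e_i)^{[p]}:=(0,e_i^{[p_L]})$. It then remains to verify the hypothesis of the criterion on these basis vectors.

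For the $A$-basis this is immediate: a one-line computation gives $\operatorname{ad}_{(a_j,0)}^2=0$, hence $\operatorname{ad}_{(a_j,0)}^p=0=\operatorname{ad}_{(0,0)}$ since $p\ge 2$, matching the trivial prescription. For the $L$-basis I would iterate $\operatorname{ad}_{(0,e_i)}(b,y)=(-\,e_i\cdot b,\;[y,e_i])$, observing that the $A$-component obeys $b_{k+1}=-e_i\cdot b_k$ independently of $y$, so after $p$ steps the $A$-component is $(-1)^p\,e_i^{\,p}\cdot b=-\,e_i^{\,p}\cdot b$ (in characteristic $p$), while the $L$-component is $(\operatorname{ad}_{e_i})^p(y)=[y,e_i^{[p_L]}]$ by relation (1.2) in $L$. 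Comparing with $\operatorname{ad}_{(0,e_i^{[p_L]})}(b,y)=(-\,e_i^{[p_L]}\cdot b,\;[y,e_i^{[p_L]}])$ reduces the claim to the single identity $e_i^{\,p}\cdot b=e_i^{[p_L]}\cdot b$, i.e.\ that the $p$-fold iterated action of $e_i$ agrees with the action of $e_i^{[p_L]}$.

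This last identity is exactly the hypothesis that $A$ is a \emph{restricted} $L$-module, and it is the only place that hypothesis enters; this is the crux of the argument and the step I expect to be the main obstacle, since without restrictedness of $A$ the prescribed $p$-map would fail Jacobson's criterion. Once the criterion is confirmed on a basis, Jacobson's theorem supplies the unique $p$-map, whose closed form is $(a,x)^{[p]}=(x^{p-1}\cdot a,\;x^{[p_L]})$, with $x^{p-1}\cdot a$ the $(p-1)$-fold iterated action. One could instead adopt this formula as the definition and verify (1.1)--(1.3) directly, but the additivity relation (1.3) would then demand an explicit evaluation of the universal polynomials $s_i$ on $A\oplus L$, precisely the computation that the existence criterion lets us avoid.
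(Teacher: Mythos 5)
Your proposal is correct and follows essentially the same route as the paper: equip $A\oplus L$ with the semidirect-product bracket, check $\operatorname{ad}^{p}$ separately on elements of $A$ (where it vanishes since $\operatorname{ad}_a^2=0$) and on elements of $L$ (where restrictedness of the module gives $\operatorname{ad}_{Y}^{p}=\operatorname{ad}_{Y^{[p]}}$), and then invoke Jacobson's existence theorem to produce the unique $p$-map, arriving at the same closed formula $(a+X)^{[p]}=X^{p-1}(a)+X^{[p]}$. The only cosmetic difference is that you verify Jacobson's criterion on a chosen basis while the paper checks it on all of $A$ and all of $L$ directly.
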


\begin{proof}
It is well known that $A\oplus L$ is endowed with the structure of a
Lie algebra with bracket given by:
$$[a+X,b+Y]=(X(b)-Y(a))+[X,Y]$$ for any $a,b\in A$ and $X,Y\in L$.

Moreover, we have:
\begin{align*}
[a+X,\underbrace{Y,]\cdots,Y}_{p}\;] &=-(\underbrace{Y(Y(\dots
(Y}_{p}(a)))) +[X,\underbrace{Y],\cdots
,Y}_{p}\;]\\
 &=-(\underbrace{Y(Y(\dots (Y}_{p}(a))))+[X,Y^{[p]}]\\
&=[a+X,Y^{[p]}]
\end{align*}
Besides,
$$[a+X,\underbrace{b]\cdots,b}_{p}\;]=0$$
Therefore, from Jacobson's theorem there exists a unique $p$-map
$$(-)^{[p]}: A\oplus L \rightarrow A\oplus L$$ which extents the
$p$-map on $L$ and such that $a^{[p]}=0$ for all $a\in A$. In
particular we see that the $p$-map on $A\oplus L$ is given by:
$$(a+X)^{[p]}=(\underbrace{X(X(\dots (X}_{p-1}(a))))+X^{[p]}$$
We denote this restricted Lie algebra structure on $A\oplus L$ by
$A\rtimes L$.
\end{proof}

\begin{remark}
Let $A$ be a restricted $L$-module. We consider the invariants for
the Lie action $A^{L}=\{a\in A : la=0,\;\; \text{for all}\; l\in
L,\;a\in A\}$. If $f: A\rightarrow A^{L}$ is a $p$-semi linear map,
then it is easily seen that $A\oplus L$ is a restricted Lie algebra
with $p$-map given by $(a+X)^{[p]}:=(\underbrace{X(X(\dots
(X}_{p-1}(a))))+X^{[p]}+f(a)$. We denote this restricted Lie algebra
by $A\rtimes_{f} L$.
\end{remark}

Let $A$ be a commutative algebra over a field $k$. A $k$-linear map
$D: A\rightarrow A$ is called a $k$-derivation if $$D(ab)=aD(b)+D(a)b$$
 Let $\rm{Der_{k}}(A)$ be the set of $k$-derivations of $A$. It is well known
 that if $D,D'\in \rm{Der}(A)$, then $[D,D']:=DD'-D'D$ is a
 derivation. Thus, $(\rm{Der}(A),[-,-])$ is a Lie algebra. Moreover,
 if $D\in \rm{Der_{k}}(A)$ and $a,x\in A$ then $aD: A\rightarrow A$, given by:
 $(aD)(x):=aD(x)$ is a derivation. Therefore, $\rm{Der_{k}}(A)$ has the
 structure of an $A$-module. Besides the following relation holds:

 $$[D,aD']=a[D,D']+D(a)D'$$

The structure on $\rm{Der_{k}}(A)$ described above is the prototype
example of the notion of Lie-Rinehart algebra. Let us recall the
definition.

\begin{definition}
A \textit{Lie-Rinehart} algebra over $A$, or $(k-A)$-Lie algebra, is
a pair $(A,L)$ where, $A$ is a commutative algebra over $k$, $L$ is
a Lie algebra over $k$ equipped with the structure of an $A$-module
together with a map called \textit{anchor} $\alpha: L\rightarrow
\rm{Der_{k}}(A)$ which is an $A$-module and a Lie algebra morphism
such that:
$$[X,aY]=a[X,Y]+\alpha(X)(a)Y$$
for all $a\in A$ and $X,Y \in L$.
\end{definition}

In order to simplify the notation we denote $\alpha(X)(a)$ by
$X(a)$. Moreover, we denote by $\rm{LR}(A)$ the category of
Lie-Rinehart algebras over $A$.

\begin{example}
We easily see that $(A,\rm{Der_{k}}(A))$ is a Lie-Rinehart algebra
with anchor map $id: \rm{Der_{k}}(A)\rightarrow \rm{Der_{k}}(A)$.
\end{example}

Suppose now that the ground field $k$ is a field of characteristic
$p\neq 0$. Let $D\in \rm{Der_{k}}(A)$, from Leibniz rule for all
$a,b \in A$ we have
$$D^{p}(ab)=\sum_{i=0}^{i=p}\binom{p}{i}D^{i}(a)D^{p-i}(b)$$ Since
$char\;k=p$ we get: $$D^{p}(ab)=aD^{p}(b)+D^{p}(a)b$$ Therefore,
$D^{p}$ is a derivation. In other words $\rm{Der_{k}}(A)$ is
equipped with the structure of restricted Lie algebra. Moreover, by
Hochschild's Lemma $1$ in \cite{Hoch2} we get the relation:
\begin{equation}
(aD)^{p}=a^{p}D^{p}+(aD)^{p-1}(a)D
\end{equation}
Therefore, we see in prime characteristic that the set of derivations
$\rm{Der_{k}}(A)$ has a richer structure than just a Lie-Rinehart
algebra structure. We are naturally led to the following definition
of restricted Lie-Rinehart algebra.

From now on we fix a field $k$ of characteristic $p\neq 0$.
\begin{definition}
A \textit{restricted Lie-Rinehart} algebra $(A,L,(-)^{[p]})$ over a
commutative $k$-algebra $A$, is a Lie-Rinehart algebra over $A$ such
that: $(L,(-)^{[p]})$ is a restricted Lie algebra over $k$, the
anchor map is a restricted Lie homomorphism, and the following
relation holds:

\begin{equation}
(aX)^{[p]}=a^{p}X^{[p]}+ (aX)^{p-1}(a)X
\end{equation}
for all $a\in A$ and $X\in L.$
\end{definition}

Let $(A,L,(-)^{[p]})$ and $(A',L',(-)^{[p]})$ be restricted
Lie-Rinehart algebras. Then a Lie-Rinehart morphism $(\xi,f):
(A,L,(-)^{[p]})\rightarrow (A',L',(-)^{[p]})$ is called restricted
Lie-Rinehart morphism if $f$ is a restricted Lie morphism, namely:
$f(x^{[p]})=f(x)^{[p]}$ for all $x\in L$. We will denote by
$p-\rm{LR}$ the category of restricted Lie-Rinehart algebras.

\begin{example}
As we have seen if $A$ is a commutative algebra over $k$, then
$Der_{k}(A)$ is a restricted Lie-Rinehart algebra.
\end{example}

\begin{example}
Any restricted Lie algebra over $k$ is a restricted Lie-Rinehart
algebra $(A,L,(-)^{[p]})$, where $A=k$.
\end{example}

 The structure of restricted Lie Rinehart algebra appears in
Jacobson-Galois theory of purely inseparable extensions of exponent
$1$.

\begin{example}
 Let $K/k$ be a purely inseparable field extension of
exponent $1$. Then, there is a one-to-one correspondence between
intermediate fields and restricted Lie Rinehart sub-algebras of the
restricted Lie Rinehart algebra $Der_{k}(K)$ over $K$ (see
\cite{Jac2}).
\end{example}

The Jacobson-Galois correspondence for purely inseparable extensions
of exponent $1$ has been used  by several authors  in order to study
isogenies of algebraic groups especially of abelian varietes. The
notion of restricted Lie-Rinehart appears in this study.

\begin{example}
Let $G$ be an algebraic group over $k$, and $K$ be the field of
rationales functions of $G$. Then, the $K$-space of derivations
$D_{k}(K)$ is a restricted Lie-Rinehart algebra over $K$  (see
\cite{Ser}).
\end{example}

The structure of restricted Lie Rinehart algebras emerges also in
connection with theory of characteristic classes.

\begin{example}
Let $k\subset K$ be fields of characteristic $p\neq 0$. Then,
Maakestad considers the category $Lie_{K/k}$ (see \cite{Ma}) whose
objects are restricted Lie Rinehart sub-algebras of the restricted
Lie Rinehart algebra $Der_{k}(K)$ over $K$. Moreover, Maakestad
construct a contravariant functor which associates $g\in Lie_{K/k}$
to the Grothendiek ring $K_{0}(g)$ of the category of
$g$-connections. (for details see Theorem $3.2$ in \cite{Ma}).
\end{example}

\begin{example}
Let $P$ be a Poisson algebra over $k$. If we denote by $$C:=\{c\in
P\;\; |\; [c,-]=0\}$$ then we see that $C$ is closed under the
commutative and Lie bracket. Thus, $C$ is a Poisson subalgebra of
$P$. We call a \text{Poisson derivation} a $k$-linear map
$D:P\rightarrow P$ which is at the same time a derivation with
respect to commutative and Lie product. We can easily see that,
$\rm{Der_{k}}(P)$ has the structure of Lie $k$-algebra. Moreover, by
Leibniz rule we can see that $D^{p}$ is a derivation with respect to
commutative and the Lie product. Thus, $\rm{Der_{k}}(P)$ has the
structure of restricted Lie $k$-algebra. Besides, if $c\in C$ and
$D\in \rm{Der_{k}}(P)$ then $cD\in \rm{Der_{k}}(P)$. Moreover, by
Hochchild's Lemma $1$ (see \cite{Hoch2}) we have the relation:
$$(cD)^{p}=c^{p}D^{p}+(cD)^{p-1}(c)D$$ for all $c\in C$
Therefore, $\rm{Der_{k}}(P)$ is a restricted Lie-Rinehart algebra
over $C$.
\end{example}

\section{Restricted enveloping algebras and restricted modules}

Let $(A,L)$ be a Lie-Rinehart algebra. There is a notion of univeral
enveloping algebra $U(A,L)$ of $(A,L)$ defined by Rinehart in
\cite{Rin}. The universal enveloping algebra $U(A,L)$ is an
associative $A$-algebra which verifies the appropriate universal
property (see \cite{Hue}). We recall the definition of the
enveloping associative algebra $U(A,L)$.

The direct sum $A\oplus L$ of the underlying vector spaces has the
structure of $k$-Lie algebra given in the Proposition $1.3$. Let
$(\mathcal{U}(A\oplus L),\iota)$ be the enveloping algebra where
$\iota : A\oplus L \rightarrow \mathcal{U}(A\oplus L)$ is the
canonical embedding. We consider the subalgebra
$\mathcal{U}^{+}(A\oplus L)$ generated by $A\oplus L$. Moreover,
$A\oplus L$ has the structure of an $A$-module via
$$a(a'+X):=aa'+aX$$ for all $a,a'\in A$ and $X\in L$. Then, the
enveloping algebra $U(A,L)$ is defined as the quotient:

$$U(A,L):=\mathcal{U}^{+}(A\oplus L)/<\iota(a)\iota(a'+X)-\iota(a(a'+X))
>$$

The canonical map $\iota_{A}$ is an $A$-algebra homomorphism. The
canonical representation $A\rightarrow End_{k}(A)$ given by the
multiplication is faithful. Thus, by the universal property of
$U(A,L)$ we obtain that the $A$-algebra homomorphism $\iota_{A}:
A\rightarrow U(A,L)$ is injective. The canonical map $\iota_{L}$ is
a Lie algebra homomorphism. Moreover, in $U(A,L)$ the following
relations hold:
$$\iota_{A}(a)\iota_{L}(X)=\iota_{L}(aX),\; and \;\;
[\iota_{L}(X),\iota_{A}(a)]=\iota_{A}(X(a))$$ for all $a\in A$ and
$X\in L$.

The enveloping algebra $U(A,L)$ has a canonical filtration:
$$ A=U_{0}(A,L) \subset U_{1}(A,L)\subset U_{2}(A,L)\cdots $$ where
$U_{n}(A,L)$ is spanned by $A$ and the powers $\iota_{L}(L)^{n}$.
Therefore, we can construct the associated graded algebra given by
$$gr(U(A,L))=\oplus_{n=0}^{\infty}U_{n}(A,L)/U_{n-1}(A,L)$$ where we
set $U_{-1}(A,L)={0}$. We note that $gr(U(A,L))$ is a commutative
$A$-algebra. There is a theorem of Poincare-Birkhoff- Witt type due
to Rinehart. In particular, it is proved (see \cite{Rin},
Theorem $3.1$) that if $L$ is a projective $A$-module and $S_{A}(L)$
denotes the symmetric $A$-algebra on $L$ then the canonical
epimorphism $\theta: S_{A}(L) \rightarrow gr(U(A,L))$ is an
isomorphism of $A$-algebras. Moreover, we obtain that $\iota_{L}:
L\rightarrow U(A,L)$ is injective.

Let $(A,L)$ be a restricted Lie-Rinehart algebra over $A$ and
suppose that $L$ is free as an $A$-module. Let $\{u_{i}, i\in I\}$
be an ordered $A$-basis of $L$. Let $\mathcal{C}(U(A,L))$ denote the
center of $U(A,L)$. Since $L$ is a restricted Lie algebra we obtain:
for all $u_{i}$ there is a $z_{i}\in \mathcal{C}(U(A,L))$ such that
$u_{i}^{p}-u_{i}^{[p]}=z_{i}$.

\begin{theorem}
Let $(A,L)$ be a restricted Lie-Rinehart algebra such that $L$ is
free as an $A$-module. Then the set,
$$B:=\{z_{i_{1}}^{h_{1}}z_{i_{2}}^{h_{2}}\cdots z_{i_{r}}^{h_{r}}
u_{i_{1}}^{k_{1}}u_{i_{2}}^{k_{2}}\cdots u_{i_{r}}^{k_{r}}\}$$ where
$i_{1}<i_{2}<\cdots <i_{r}$, $h_{i}\geq 0$ and $0\leq k_{i}<p$ is an
$A$-basis of $U(A,L).$
\end{theorem}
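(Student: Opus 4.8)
The plan is to deduce the result from Rinehart's Poincaré–Birkhoff–Witt theorem by passing to the associated graded algebra and identifying leading symbols. Since $L$ is free over $A$ it is in particular projective, so by Rinehart's theorem the canonical epimorphism $\theta\colon S_A(L)\to gr(U(A,L))$ is an isomorphism of $A$-algebras. Consequently $gr(U(A,L))$ is the free commutative $A$-algebra on the symbols $\bar u_i$, and the ordered monomials $\bar u^{\mathbf n}:=\prod_i \bar u_i^{n_i}$, indexed by the finitely supported multi-indices $\mathbf n=(n_i)_{i\in I}$ with $n_i\ge 0$, form an $A$-basis of $gr(U(A,L))$. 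I would use the multiplicative filtration $U_n(A,L)$ throughout, writing $\sigma(x)$ for the symbol of $x$ in the appropriate graded piece.

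First I would locate each element of $B$ in the filtration and compute its symbol. Since $u_i^p\in U_p(A,L)$ and $u_i^{[p]}\in U_1(A,L)\subseteq U_{p-1}(A,L)$, the central element $z_i=u_i^p-u_i^{[p]}$ lies in $U_p(A,L)$ and has symbol $\sigma(z_i)=\bar u_i^{\,p}$ in $U_p/U_{p-1}$; in particular $z_i\in U_p\setminus U_{p-1}$, so $z_i$ has filtration degree exactly $p$. Because the filtration is multiplicative and the $z_i$ are central, a monomial $z^{\mathbf h}u^{\mathbf k}=\big(\prod_i z_i^{h_i}\big)\big(\prod_i u_i^{k_i}\big)$ lies in $U_N(A,L)$ with $N=p\sum_i h_i+\sum_i k_i$. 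Working in the commutative algebra $gr(U(A,L))\cong S_A(L)$, where symbols are multiplicative whenever the product of symbols is nonzero, I obtain
\[
\sigma\big(z^{\mathbf h}u^{\mathbf k}\big)=\prod_i(\bar u_i^{\,p})^{h_i}\prod_i \bar u_i^{k_i}=\prod_i \bar u_i^{\,ph_i+k_i}=\bar u^{\mathbf n},\qquad n_i=ph_i+k_i .
\]
This product of monomials is a nonzero basis element of $S_A(L)$, which both justifies the multiplicativity of symbols here and shows that $z^{\mathbf h}u^{\mathbf k}$ has filtration degree exactly $N$ with the stated symbol.

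Next I would observe that Euclidean division gives a bijection between the index set of $B$ (finitely supported data $(h_i,k_i)$ with $h_i\ge 0$, $0\le k_i<p$) and the set of multi-indices $\mathbf n$, via $n_i=ph_i+k_i$. Hence the symbols $\{\sigma(b):b\in B\}$ are precisely the ordered monomials $\{\bar u^{\mathbf n}\}$, i.e. they run bijectively over an $A$-basis of $gr(U(A,L))$. The theorem then follows from the standard filtered-to-graded principle: if a family of elements of an exhaustively filtered $A$-module has symbols forming an $A$-basis of the associated graded, then the family itself is an $A$-basis. I would include its short proof—spanning by downward induction on filtration degree (subtract off the correct top-degree combination of elements of $B$ to drop into a lower filtration stage, with $U_{-1}=0$ starting the induction), and linear independence by projecting any nontrivial relation onto its top filtration degree and using independence of the symbols there.

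The genuinely load-bearing steps, and the only places needing care, are the symbol computation $\sigma(z_i)=\bar u_i^{\,p}$ together with the multiplicativity of symbols: one must know that $\prod_i \bar u_i^{\,ph_i+k_i}$ does not vanish in $gr(U(A,L))$, which is exactly where Rinehart's identification with the \emph{polynomial} algebra $S_A(L)$ (valid since $L$ is free) is essential—it guarantees no collapse of leading terms and supplies the target basis. The centrality of the $z_i$, recorded before the statement, is what lets me reorder freely when forming and grading the monomials $z^{\mathbf h}u^{\mathbf k}$. Everything else is formal filtered-to-graded bookkeeping.
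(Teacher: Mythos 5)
Your proposal is correct and follows essentially the same route as the paper: both rest on Rinehart's PBW isomorphism $S_A(L)\simeq gr(U(A,L))$, the observation that $z^{\mathbf h}u^{\mathbf k}$ has leading symbol $\prod_i \bar u_i^{\,ph_i+k_i}$, the Euclidean-division bijection of index sets, and an induction on filtration degree for spanning together with passage to the associated graded for independence. Your packaging of the last two steps as the standard filtered-to-graded principle is just a cleaner phrasing of what the paper does explicitly.
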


\begin{proof}
It is proved in Theorem $3.1$ in \cite{Rin} that the standard
monomials of the form $u_{i_{1}}^{s_{1}}u_{i_{2}}^{s_{2}}\cdots
u_{i_{r}}^{s_{r}}$ where $i_{1}<i_{2}<\cdots <i_{r}$ and $s_{i}\geq
0$ form an $A$-basis of $U(A,L)$. Let
$u_{i_{1}}^{s_{1}}u_{i_{2}}^{s_{2}}\cdots u_{i_{r}}^{s_{r}}$ be a
standard monomial and $s=s_{1}+\cdots+s_{r}$. By induction on $s$ we
prove that the set $B$ generates $U(A,L)$. If all $s_{i}$ are such
that $s_{i}<p$ then it is clear. Suppose that there is $s_{i_{j}}>p$
then we have
$$u_{i_{1}}^{s_{1}}u_{i_{2}}^{s_{2}}\cdots
u_{i_{r}}^{s_{r}}=z_{i_{j}}u_{i_{1}}^{s_{1}}u_{i_{2}}^{s_{2}}\cdots
u_{i_{j}}^{s_{i_{j}}-p} \cdots
u_{i_{r}}^{s_{r}}+u_{i_{1}}^{s_{1}}u_{i_{2}}^{s_{2}}\cdots
u_{i_{j}}^{s_{i_{j}}-p}u_{i_{j}}^{[p]} \cdots u_{i_{r}}^{s_{r}}$$

We notice that the terms $u_{i_{1}}^{s_{1}}u_{i_{2}}^{s_{2}}\cdots
u_{i_{j}}^{s_{i_{j}}-p} \cdots u_{i_{r}}^{s_{r}}$ and
$u_{i_{1}}^{s_{1}}u_{i_{2}}^{s_{2}}\cdots
u_{i_{j}}^{s_{i_{j}}-p}u_{i_{j}}^{[p]} \cdots u_{i_{r}}^{s_{r}}$
belong in $U_{s-1}(A,L)$, thus by induction can be written as linear
combination of elements of $B$. Next we prove that the elements of
$B$ are linearly independent. Let
$z_{i_{1}}^{h_{1}}z_{i_{2}}^{h_{2}}\cdots z_{i_{r}}^{h_{r}}
u_{i_{1}}^{k_{1}}u_{i_{2}}^{k_{2}}\cdots u_{i_{r}}^{k_{r}}$ be an
element of $B$. Since $z_{i}=u_{i}^{p}-u_{i}^{[p]}$ we get

\begin{multline}
z_{i_{1}}^{h_{1}}z_{i_{2}}^{h_{2}}\cdots z_{i_{r}}^{h_{r}}
u_{i_{1}}^{k_{1}}u_{i_{2}}^{k_{2}}\cdots u_{i_{r}}^{k_{r}}
=(u_{1}^{p}-u_{1}^{[p]})^{h_{1}}(u_{2}^{p}-u_{2}^{[p]})^{h_{2}}\cdots
(u_{r}^{p}-u_{r}^{[p]})^{h_{r}}
u_{i_{1}}^{k_{1}}u_{i_{2}}^{k_{2}}\cdots u_{i_{r}}^{k_{r}}\\
\equiv u_{i_{1}}^{h_{1}p+k_{1}}u_{i_{2}}^{h_{2}p+k_{2}}\cdots
u_{i_{r}}^{h_{r}p+k_{r}}\mod U_{s-1}(A,L)
\end{multline} where
$s=\sum_{i=1}^{i=r}h_{i}p+k_{i}$ and $i_{1}<i_{2}<\cdots <i_{r}$.

Let $(h_{i_{1}},h_{i_{2}}, \cdots, h_{i_{r}})$,
$(k_{i_{1}},k_{i_{2}}, \cdots, k_{i_{r}})$ and
$(h'_{i_{1}},h'_{i_{2}}, \cdots, h'_{i_{r}})$,
$(k'_{i_{1}},k'_{i_{2}}, \cdots, k'_{i_{r}})$ be sequences such that
$h_{i}p+k_{i}=h'_{i}p+k'_{i}$ for all $i=1,2,\cdots r$. Since
$0<k_{i}<p$ and $0<k'_{i}<p$ it is obliged to have $h_{i}=h'_{i}$
and $k_{i}=k'_{i}$ for all $i=1,2,\cdots r$. Moreover, by
Poincare-Birkhoff-Witt theorem  we have an isomorphism of
$A$-algebras $S_{A}(L)\simeq gr(U(A,L))$. Thus, by the relation
$(2.1)$ follows that the elements of $B$ are linearly independent.
\end{proof}

\begin{proposition}
Let $(A,L)$ be a Lie-Rinehart algebra such that $L$ is free as an
$A$-module. Let $\{u_{i},\;I \}$ be an ordered $A$-basis of $L$. If
there is a map $u_{i}\rightarrow u_{i}^{[p]}$ such that
$ad_{u_{i}}^{p}=ad_{u_{i}^{[p]}}$ for all $i\in I$ then $(A,L)$ can
be equipped with the structure of restricted Lie-Rinehart algebra
with a $p$-map which extends the map $(-)^{[p]}$.
\end{proposition}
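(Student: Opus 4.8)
The plan is to obtain the $p$-map from Jacobson's existence theorem and then to verify the two extra axioms of a restricted Lie-Rinehart algebra: that the anchor is a restricted homomorphism and that the relation $(aX)^{[p]}=a^{p}X^{[p]}+(aX)^{p-1}(a)X$ holds. Fix a $k$-basis $\{a_{j}\}_{j\in J}$ of $A$ with $a_{j_{0}}=1$; then $\{a_{j}u_{i}\}$ is a $k$-basis of $L$. A preliminary observation is that the hypothesis already pins down the anchor on the $u_{i}$: the defining identity $[X,aY]=a[X,Y]+X(a)Y$ rewrites as the operator relation $[ad_{X},\ell_{a}]=-\ell_{X(a)}$ on $L$, where $\ell_{a}$ denotes multiplication by $a$. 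Iterating this relation inside $\mathrm{End}_{k}(L)$ and using characteristic $p$ gives $[ad_{X}^{p},\ell_{a}]=(-1)^{p}\ell_{\alpha(X)^{p}(a)}$. Applying this with $X=u_{i}$ and comparing with the same relation for $u_{i}^{[p]}$, the hypothesis $ad_{u_{i}}^{p}=ad_{u_{i}^{[p]}}$ together with the faithfulness of the free $A$-module $L$ forces $\alpha(u_{i}^{[p]})=\alpha(u_{i})^{p}$ for every $i$.

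The heart of the argument is to check Jacobson's hypothesis on this basis, that is, to produce for each $a$ an element $w=a^{p}u_{i}^{[p]}+(au_{i})^{p-1}(a)u_{i}$ --- the value demanded by the structure relation --- with $ad_{au_{i}}^{p}=ad_{w}$. I would carry this out in the enveloping algebra $U(A,L)$. Since $L$ is free, Rinehart's theorem gives an embedding $\iota\colon L\hookrightarrow U(A,L)$, and $\iota(au_{i})=\iota_{A}(a)\iota_{L}(u_{i})$ is a scalar times an element acting on $\iota_{A}(A)$ as the derivation $\alpha(u_{i})$. Hochschild's relation $(aD)^{p}=a^{p}D^{p}+(aD)^{p-1}(a)D$, read inside $U(A,L)$, then yields $(\iota_{A}(a)\iota_{L}(u_{i}))^{p}=\iota_{A}(a^{p})\,\iota_{L}(u_{i})^{p}+\iota_{A}((au_{i})^{p-1}(a))\,\iota_{L}(u_{i})$. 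By the hypothesis and the anchor identity just proved, $z_{u_{i}}:=\iota_{L}(u_{i})^{p}-\iota_{L}(u_{i}^{[p]})$ is central, and since any derivation annihilates $p$-th powers the product $\iota_{A}(a^{p})z_{u_{i}}$ is central as well; substituting $\iota_{L}(u_{i})^{p}=\iota_{L}(u_{i}^{[p]})+z_{u_{i}}$ shows $\iota(au_{i})^{p}\equiv\iota(w)$ modulo the centre of $U(A,L)$. Because $ad_{\iota(au_{i})^{p}}=(ad_{\iota(au_{i})})^{p}$ in the associative algebra $U(A,L)$ and $\iota$ intertwines the Lie adjoint on $L$ with the associative adjoint on $U(A,L)$ up to sign, pulling back through the injection $\iota$ gives $ad_{au_{i}}^{p}=ad_{w}$ on $L$. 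Jacobson's theorem now produces a unique $p$-map making $L$ a restricted Lie $k$-algebra with each $(a_{j}u_{i})^{[p]}$ equal to the corresponding $w$; because $a_{j_{0}}=1$ and $u_{i}(1)=0$ this $p$-map restricts to the prescribed $u_{i}\mapsto u_{i}^{[p]}$.

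With the restricted Lie algebra in hand I would verify the anchor axiom first. On a basis vector, $\alpha((a_{j}u_{i})^{[p]})=a_{j}^{p}\alpha(u_{i})^{p}+(a_{j}u_{i})^{p-1}(a_{j})\alpha(u_{i})=(a_{j}\alpha(u_{i}))^{p}=\alpha(a_{j}u_{i})^{p}$, where the middle equality is Hochschild's relation and the first uses $\alpha(u_{i}^{[p]})=\alpha(u_{i})^{p}$. Because the Jacobson polynomials $s_{i}$ are Lie polynomials and $\alpha$ is a Lie homomorphism, the agreement of $\alpha$ with the two $p$-operations on a basis propagates, through the additivity relation $(1.3)$ and the semilinearity $(1.1)$, to every element of $L$; hence $\alpha(X^{[p]})=\alpha(X)^{p}$ for all $X$, and the anchor is a restricted homomorphism.

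It remains to establish the structure relation, which is the step I expect to be the main obstacle. Using $(1.1)$ and $(1.3)$ one reduces it first to the basis vectors $X=u_{i}$ and then, since both sides have the same $u_{i}^{[p]}$-component by the Frobenius identity, to the scalar identity $\psi(a)=(au_{i})^{p-1}(a)$ in $A$, where $\psi$ is defined by $(au_{i})^{[p]}=a^{p}u_{i}^{[p]}+\psi(a)u_{i}$. The difficulty is that the brackets of the $a_{j}u_{i}$ all lie in $Au_{i}$, with bracket $[bu_{i},cu_{i}]=(b\,u_{i}(c)-c\,u_{i}(b))u_{i}$, and applying the anchor identifies $\psi(a)$ with $(au_{i})^{p-1}(a)$ only after multiplication by $\alpha(u_{i})$, so one cannot cancel $\alpha(u_{i})$ when its annihilator in $A$ is nonzero. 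The way around this is to note that both $\psi$ and $a\mapsto(au_{i})^{p-1}(a)$ are $p$-semilinear, agree on the basis $\{a_{j}\}$, and carry the same additive defect, namely the universal $s$-polynomial evaluated in the bracket $\{b,c\}=b\,u_{i}(c)-c\,u_{i}(b)$ on $A$; as this defect is intrinsic to the $k$-algebra-with-derivation $(A,\alpha(u_{i}))$, the identity $\psi(a)=(au_{i})^{p-1}(a)$ is a universal polynomial identity and may be checked in the generic, annihilator-free case, where the anchor computation becomes conclusive. Extending the structure relation from the $u_{i}$ to an arbitrary $X=\sum_{i}c_{i}u_{i}$ is then a routine, if lengthy, computation with the additivity relation and the Lie-Rinehart identities. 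When $A$ is a field, as in the application to purely inseparable extensions, the annihilator of a nonzero derivation vanishes and this subtlety disappears altogether.
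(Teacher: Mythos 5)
Your route is genuinely different from the paper's, and the difference matters. The paper never invokes Jacobson's extension theorem on a basis: it forms $U_{p}(A,L):=U(A,L)/J$ with $J$ generated by the central elements $z_{i}=u_{i}^{p}-u_{i}^{[p]}$, uses the PBW-type basis of Theorem $2.1$ to see that $i_{L}\colon L\rightarrow U_{p}(A,L)$ is injective, checks via Hochschild's Lemma and relation $(1.3)$ that $i_{L}(x)^{p}\in i_{L}(L)$ for every $x\in L$, and \emph{defines} $x^{[p]}:=i_{L}^{-1}\bigl(i_{L}(x)^{p}\bigr)$. The payoff is that every axiom --- in particular the structure relation $(1.5)$ for arbitrary $a\in A$ and $X\in L$ --- is inherited verbatim from the corresponding associative identity in $U_{p}(A,L)$, with no element-by-element verification. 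Two pieces of your argument are sound and even supplement the paper: the observation that $ad_{u_{i}}^{p}=ad_{u_{i}^{[p]}}$ forces $\alpha(u_{i}^{[p]})=\alpha(u_{i})^{p}$ (which is exactly what makes the $z_{i}$ central, a point the paper passes over), and the computation in $U(A,L)$ showing $ad_{au_{i}}^{p}=ad_{w}$ for $w=a^{p}u_{i}^{[p]}+(au_{i})^{p-1}(a)u_{i}$.

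The gap is in your final paragraph, i.e.\ precisely in the verification that the Jacobson $p$-map satisfies $(1.5)$. First, the scalar identity $\psi(a)=(au_{i})^{p-1}(a)$ is not established: ``a universal polynomial identity \dots checked in the generic, annihilator-free case'' is not an argument until the universal object and the specialization map are actually constructed, which you do not do; and the alternative you gesture at (agreement on a $k$-basis of $A$ plus equal additive defects) requires proving that the additive defect of $a\mapsto(au_{i})^{p-1}(a)$ is the asserted universal Lie polynomial in the bracket $b\,u_{i}(c)-c\,u_{i}(b)$ --- itself a claim that needs Jacobson's formula inside $U(A,L)$, at which point you are reconstructing the quotient argument. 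Second, and more seriously, passing from $X=u_{i}$ to arbitrary $X=\sum_{i}c_{i}u_{i}$ with $c_{i}\in A$ is not ``routine'': relations $(1.1)$ and $(1.3)$ control only $k$-linear combinations, so expanding $(aX)^{[p]}$ produces cross terms $s_{l}(ac_{i}u_{i},ac_{j}u_{j})$ that must be matched against $a^{p}X^{[p]}+(aX)^{p-1}(a)X$, and your setup provides no mechanism for this short of another enveloping-algebra computation. Since you already work in $U(A,L)$ and already know the $z_{i}$ are central, the efficient repair is to pass to the quotient by the ideal they generate and read the $p$-map off from the associative $p$-th power there, as the paper does; then $(1.5)$ is Hochschild's Lemma applied to $i_{A}(a)i_{L}(X)$ and no case analysis remains.
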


\begin{proof}
Let $J$ be the ideal of $U(A,L)$ generated by the
$z_{i}:=u_{i}^{p}-u_{i}^{[p]}$. We consider the associative algebra
$U_{p}(A,L):=U(A,L)/J$. By the previous theorem we get that the elements
of the form
$$u_{i_{1}}^{k_{1}}u_{i_{2}}^{k_{2}}\cdots u_{i_{r}}^{k_{r}}$$ where
$i_{1}<i_{2}<\cdots <i_{r}$, and $0\leq k_{i}<p$ constitute an
$A$-basis for $U_{p}(A,L).$

The canonical $A$-algebra homomorphism
$\iota_{A}: A\rightarrow U(A,L)$ induce an $A$-algebra homomorphism
$i_{A}: A\rightarrow U_{p}(A,L)$. Moreover, the Lie homomorphism
$\iota_{L}: L \rightarrow U(A,L)_{Lie}$ induce a Lie algebra
homomorphism $i_{L}: L\rightarrow U_{p}(A,L)_{Lie}$ which is injective. Moreover, we have
$$(i_{L}(u_{i}))^{p}=u_{i}^{p}+J=u_{i}^{[p]}+J$$
and $(i_{L}(u_{i}))^{p}\in i_{L}(L)$.
Besides,
$$(i_{L}(au_{i}))^{p}=(au_{i})^{p}+J$$ By Hochschild's relation
Lemma $1$ in \cite{Hoch2} we get in $U(A,L)$ the relation
\begin{align*}
(au_{i})^{p}
&=a^{p}u_{i}^{p}+[\underbrace{au_{i},[au_{i},\cdots[au_{i}}_{p-1},a],\cdots]u_{i}\\
&=a^{p}u_{i}^{p}+(au_{i})^{p-1}(a)u_{i}\\
\end{align*}
Therefore,
\begin{align*}
 (i_{L}(au_{i}))^{p} &=a^{p}u_{i}^{p}+(au_{i})^{p-1}(a)u_{i}+J\\
                     &=a^{p}u_{i}^{[p]}+(au_{i})^{p-1}(a)u_{i}+J
 \end{align*}
and $(i_{L}(au_{i}))^{p}\in i_{L}(L)$. Therefore, by relation
$(1.3)$ we get that for all $x\in L$ we have $x^{p}\in i_{L}(L)$ and
$L$ is a restricted Lie $k$-subalgebra of $U_{p}(A,L)_{RLie}$.
Obviously the relation $(1.5)$ of the definition holds and $(A,L)$
is equipped with the structure of a restricted Lie-Rinehart algebra.
\end{proof}

\begin{remark}
Let $(A,L)$ be a Lie-Rinehart algebra such that $L$ is free as an
$A$-module. Let $\{u_{i},\;I \}$ be an ordered $A$-basis of $L$. We
easily see using the relations $1.3$ and $1.4$, that the ideal of
$U(A,L)$ generated by the elements $\{X^{p}-X^{[p]},\;X\in L\}$ is
equal to $J$.
\end{remark}

Next, we introduce the notion of restricted enveloping algebra of a
restricted Lie-Rinehart algebra. Let $(A,L,(-)^{[p]})$ be a
restricted Lie-Rinehart algebra. We define as \textit{restricted
enveloping algebra} $U_{p}(A,L)$ of a restricted Lie-Rinehart
algebra $(A,L,(-)^{[p]})$, the quotient:
$$U_{p}(A,L):=U(A,L)/<X^{[p]}-X^{p} >$$
where, $a\in A$ and $X\in L$.

\begin{remark}
We note that in Proposition $2.2$ is proved that the map $i_{L}: L
\rightarrow U_{p}(A,L)_{RLie}$ is a restricted Lie monomorphism when
$L$ is a free as an $A$-module.
\end{remark}

 The following proposition gives us
the universal property of the restricted enveloping algebra.

\begin{proposition}
Let $B$ be an $A$-algebra such that there is an $A$-algebra
homomorphism $\phi_{A}: A \rightarrow B$ and $\phi_{L}: L\rightarrow
B_{RLie}$ a restricted Lie $k$-homomorphism. If we have:
$$\phi_{A}(a)\phi_{L}(X)=\phi_{L}(aX),\; and \;\;
[\phi_{L}(X),\phi_{A}(a)]=\phi_{A}(X(a))$$ for all $a\in A$ and $X
\in L$ then there exists a unique homomorphism of associative
algebras $\Phi_{p}: U_{p}(A,L) \rightarrow B$ such that $\Phi_{p}
i_{A}=\phi_{A}$ and $\Phi_{p} i_{L}=\phi_{L}$.
\end{proposition}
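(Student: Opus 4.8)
The plan is to obtain $\Phi_p$ in two stages: first invoke the universal property of the non-restricted enveloping algebra $U(A,L)$ recalled above (following \cite{Rin}, \cite{Hue}), and then descend to the quotient $U_p(A,L)$. To begin, I would observe that a restricted Lie $k$-homomorphism $\phi_L: L\rightarrow B_{RLie}$ is in particular a homomorphism of Lie algebras $\phi_L: L\rightarrow B_{Lie}$, and that the two compatibility relations assumed in the statement are exactly the relations required by the universal property of $U(A,L)$. Hence there exists a unique homomorphism of associative algebras $\Phi: U(A,L)\rightarrow B$ such that $\Phi\iota_A=\phi_A$ and $\Phi\iota_L=\phi_L$.

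The key step is to check that $\Phi$ annihilates the two-sided ideal $\langle X^{[p]}-X^{p}\rangle$ defining $U_p(A,L)$, so that it factors through the projection. It suffices to evaluate $\Phi$ on the generators $\iota_L(X^{[p]})-\iota_L(X)^{p}$ for $X\in L$. Since $\Phi$ is an algebra map, this gives $\Phi(\iota_L(X^{[p]}))-\Phi(\iota_L(X))^{p}=\phi_L(X^{[p]})-\phi_L(X)^{p}$. Here the restricted hypothesis enters: because the $p$-map on $B_{RLie}$ is the Frobenius map $x\mapsto x^{p}$ (Example 1.2) and $\phi_L$ is a restricted Lie homomorphism, one has $\phi_L(X^{[p]})=\phi_L(X)^{[p]}=\phi_L(X)^{p}$, where the final power is taken in the associative algebra $B$. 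Therefore the expression vanishes, $\Phi$ is zero on every generator, and hence on the whole ideal.

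Consequently $\Phi$ factors as $\Phi=\Phi_p\circ\pi$, where $\pi: U(A,L)\rightarrow U_p(A,L)$ is the canonical projection and $\Phi_p$ is a homomorphism of associative algebras. Since $i_A=\pi\iota_A$ and $i_L=\pi\iota_L$, the identities $\Phi_p i_A=\phi_A$ and $\Phi_p i_L=\phi_L$ follow immediately from the corresponding identities for $\Phi$. For uniqueness I would use that $\pi$ is surjective: if $\Phi_p'$ is another homomorphism with the same restrictions, then $\Phi_p'\pi$ is a homomorphism $U(A,L)\rightarrow B$ agreeing with $\phi_A$ and $\phi_L$ along $\iota_A$ and $\iota_L$, so by the uniqueness in the universal property of $U(A,L)$ we get $\Phi_p'\pi=\Phi=\Phi_p\pi$, and surjectivity of $\pi$ forces $\Phi_p'=\Phi_p$.

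I expect the only genuinely delicate point to be the middle step, and even there the substance is just the bookkeeping identity that the Frobenius $p$-power in $B$ coincides with the restricted $p$-operation of $B_{RLie}$. Once this is isolated, the restricted hypothesis on $\phi_L$ makes the defining generators map to zero automatically, and the rest is a formal descent along the quotient together with the uniqueness already available for $U(A,L)$.
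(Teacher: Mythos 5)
Your proof is correct and follows essentially the same route as the paper: both obtain a homomorphism $\Phi: U(A,L)\rightarrow B$ (you by citing the universal property of $U(A,L)$, the paper by reconstructing it from the Lie morphism $A\oplus L\rightarrow B_{Lie}$ and the quotient presentation of $U(A,L)$) and then descend to $U_{p}(A,L)$ using exactly the observation that $\phi_{L}(X^{[p]})=\phi_{L}(X)^{p}$ in $B$. Your explicit uniqueness argument via surjectivity of the canonical projection is a detail the paper leaves implicit.
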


\begin{proof}
We easily see that the map $f: A\oplus L\rightarrow B_{Lie}$ given
by
$$f(a+X)=\phi_{A}(a)+\phi_{L}(X),$$ for all $a\in A$ and $X\in L$ is a
Lie morphism. Therefore, there is a an algebra morphism $f':
\mathcal{U}^{+}(A\oplus L)\rightarrow B$. Moreover,
\begin{align*}
f'(\iota(a(a'+X))) &=f(aa'+aX)\\
            &=\phi_{A}(aa')+\phi_{L}(aX)\\
            &=\phi_{A}(a)\phi_{A}(a')+\phi_{A}(a)\phi_{L}(X)\\
            &=f(a)f(a'+X)\\
            &=f'(\iota (a))f'(\iota (a'+X))\\
            &=f'(\iota (a)(\iota (a'+X))
\end{align*}
Thus, $f'$ induces an algebra morphism $\Phi: U(A,L)\rightarrow B$.
Since, $\phi_{L}$ is a restricted Lie homomorphism we have
$\Phi(X^{[p]})=\Phi(X^{p})$. Therefore, $\Phi$ induces an algebra
morphism $\Phi_{p}: U_{p}(A,L) \rightarrow B$.
\end{proof}

\begin{remark}
The canonical representation $A\rightarrow End_{k}(A)$ given by the
multiplication, is faithful. By the universal property of
$U_{p}(A,L)$ we get that the $A$-algebra homomorphism $i_{A}:
A\rightarrow U_{p}(A,L)$ is injective.
\end{remark}

\begin{definition}
Let $(A,L,(-)^{[p]})$ be a restricted Lie-Rinehart algebra. A
\textit{restricted Lie-Rinehart} module, is a Lie-Rinehart
$(A-L)$-module $M$ which additionally is a restricted Lie
$L$-module. In other words, a restricted Lie-Rinehart $(A-L)$-module
is a $k$-module $M$ equipped with the structures of an $A$-module
and a Lie $L$-module such that:
\begin{align*}
(aX)m    &=a(Xm)\\
X(am)    &=aX(m)+X(a)m \\
X^{[p]}m &=(\underbrace{X(X(\dots(X}_{p}m))))
\end{align*}
for all $a\in A$, $X\in L$ and $m\in M$.
\end{definition}

Let $(A,L,(-)^{[p]})$ be a restricted Lie-Rinehart algebra. The
category of restricted $(A,L)$-modules is equivalent to the category
of $U_{p}(A,L)$-modules.

\begin{example}
The notion of a restricted Lie-Rinehart module recovers in a
particular case the notion of \textit{regular module} defined in
\cite{Ber}. Namely, if $K/k$ is a purely inseparable extension of
exponent $1$ then a regular module is just a restricted Lie-Rinehart
module over the restricted Lie Rinehart algebra $Der_{k}(K)$.
\end{example}

\begin{example}
Let $\mathbf{g}\in Lie_{K/k}$ be a restricted Lie-Rinehart algebra
(see 1.12 and \cite{Ma}) then a $p$-flat connection is a restricted
Lie-Rinehart $\mathbf{g}$-module.
\end{example}

An important example of Lie algebroid is the \textit{transformation
Lie algebroid} for a differential manifold. There is an algebraic
generalazation of this notion called the \textit{transformation
Lie-Rihart algebra}.

\begin{proposition}
Let $\textbf{g}\in RLie$ be a restricted Lie $k$-algebra and $A$ a
commutative $k$-algebra. If there is a restricted Lie homomorphism
$\delta: \textbf{g}\rightarrow Der(A)$, then the
\textit{transformation} Lie-Rinehart algebra $(A,A\otimes_{k}
\textbf{g})$ can be endowed with the structure of a restricted Lie
-Rinehart algebra.
\end{proposition}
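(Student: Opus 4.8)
The plan is to reduce the statement to Proposition $2.2$, which says that a Lie-Rinehart algebra that is free as an $A$-module acquires a restricted structure as soon as one specifies, on an $A$-basis $\{u_i\}$, a map $u_i \mapsto u_i^{[p]}$ satisfying $ad_{u_i}^p = ad_{u_i^{[p]}}$. First I would recall the transformation Lie-Rinehart structure on $L := A \otimes_k \mathbf{g}$: the $A$-module structure is $a(b \otimes X) = ab \otimes X$, the anchor is $\alpha(a \otimes X) = a\,\delta(X)$, and the bracket is
$$[a \otimes X, b \otimes Y] = ab \otimes [X,Y] + a\,\delta(X)(b) \otimes Y - b\,\delta(Y)(a) \otimes X.$$
Checking that this is a Lie-Rinehart algebra is routine and may be cited. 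Choosing a $k$-basis $\{X_i\}_{i \in I}$ of $\mathbf{g}$, the family $\{1 \otimes X_i\}_{i \in I}$ is an $A$-basis of $L$, so $L$ is free over $A$ and Proposition $2.2$ applies once a suitable $p$-map on this basis is exhibited.

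The natural candidate is $(1 \otimes X_i)^{[p]} := 1 \otimes X_i^{[p]}$, transported from the $p$-map of the restricted Lie algebra $\mathbf{g}$. The key step is to verify the hypothesis $ad_{1 \otimes X_i}^p = ad_{1 \otimes X_i^{[p]}}$. Writing $D_i := \delta(X_i)$ and using the paper's convention $ad_x(y) = [y,x]$, I compute $ad_{1 \otimes X_i}(b \otimes Y) = [b \otimes Y, 1 \otimes X_i]$ from the bracket above; the term carrying $\delta(Y)(1)$ drops out because $\delta(Y)$ is a derivation, leaving
$$ad_{1 \otimes X_i}(b \otimes Y) = b \otimes ad_{X_i}(Y) - D_i(b) \otimes Y.$$
Hence $ad_{1 \otimes X_i} = 1 \otimes ad_{X_i} - D_i \otimes 1$ as a $k$-linear endomorphism of $A \otimes_k \mathbf{g}$, and its two summands commute because one acts only on the $\mathbf{g}$-factor and the other only on the $A$-factor.

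The crux is then the characteristic-$p$ identity $(S+T)^p = S^p + T^p$ for commuting endomorphisms $S,T$, valid because each intermediate binomial coefficient $\binom{p}{j}$ with $0 < j < p$ is divisible by $p$. Applying it (and noting $(-D_i)^p = -D_i^p$ in characteristic $p$) yields $ad_{1 \otimes X_i}^p = 1 \otimes ad_{X_i}^p - D_i^p \otimes 1$. Now $\mathbf{g}$ being restricted gives $ad_{X_i}^p = ad_{X_i^{[p]}}$, while $\delta$ being a restricted homomorphism into $Der(A)$, whose $p$-operation is $D \mapsto D^p$, gives $D_i^p = \delta(X_i)^p = \delta(X_i^{[p]})$. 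Therefore $ad_{1 \otimes X_i}^p = 1 \otimes ad_{X_i^{[p]}} - \delta(X_i^{[p]}) \otimes 1 = ad_{1 \otimes X_i^{[p]}}$, which is exactly the hypothesis of Proposition $2.2$; that proposition then furnishes a $p$-map extending $1 \otimes X_i \mapsto 1 \otimes X_i^{[p]}$ and endows $(A, A \otimes_k \mathbf{g})$ with the desired restricted Lie-Rinehart structure. I expect the only genuine obstacle to be the clean identification of $ad_{1 \otimes X_i}$ as a sum of commuting operators together with the correct sign bookkeeping under the paper's $ad$-convention; the remainder is the freshman's-dream identity and the two restrictedness hypotheses.
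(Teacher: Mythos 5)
Your proposal is correct and follows essentially the same route as the paper: the decomposition $ad_{1\otimes X_i}=1\otimes ad_{X_i}-\delta(X_i)\otimes 1$ into commuting summands is exactly the paper's pair of operators $\tau_{1\otimes g_i},\rho_{1\otimes g_i}$, and both arguments conclude via the characteristic-$p$ binomial identity and an appeal to Proposition $2.2$. The only cosmetic difference is that the paper also records the explicit $p$-map $(a\otimes g)^{[p]}=a^{p}\otimes g^{[p]}-(a\delta(g))^{p-1}(a)\otimes g$ and notes via $(1.4)$ that the anchor is restricted, which your reduction leaves implicit in Proposition $2.2$.
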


\begin{proof}
The Lie-Rinehart algebra of transformation is a Lie $k$-algebra with
Lie bracket given by:
$$[a\otimes g,a'\otimes g']:=aa'\otimes[g,g']+a\delta(g)(a')\otimes
g'-a'\delta(g')(a)\otimes g$$ for all $a,a'\in A$ and $g,g' \in
\textbf{g}$, and with anchor map $\alpha: A\otimes_{k}
\textbf{g}\rightarrow Der(A)$ given by $\alpha(a\otimes
g)(a')=a\delta(g)(a')$. Let $\{g_{i}, i\in I\}$ be a $k$-basis of
$\textbf{g}$, then the elements $\{1\otimes g_{i}, i\in I\}$ form an
$A$-basis of $A\otimes \textbf{g}$. Let $\tau_{1\otimes
g_{i}},\rho_{1\otimes g_{i}}: A\otimes_{k} \textbf{g} \rightarrow
A\otimes \textbf{g} $ be the $k$-linear maps given by
$\tau_{1\otimes g_{i}}(a\otimes g):=a\otimes [g,g_{i}]$ and
$\rho_{1\otimes g_{i}}(a\otimes g):=\delta(g_{i})(a)\otimes g$
respectively. We note that: $$\tau_{1\otimes g_{i}}\rho_{1\otimes
g_{i}}=\rho_{1\otimes g_{i}}\tau_{1\otimes g_{i}}$$ Since
$char\;k=p$ we have:
\begin{align*}
ad_{1\otimes g_{i}}^{p} &=(\tau_{1\otimes g_{i}}-\rho_{1\otimes g_{i}})^{p}    \\
         &=\sum_{j=0}^{j=p}\binom{p}{j}\tau_{1\otimes g_{i}}^{j}\rho_{1\otimes
         g_{i}}^{p-j}\\
         &=\tau_{1\otimes g_{i}}^{p}-\rho_{1\otimes g_{i}}^{p}\\
         &=ad_{1\otimes g_{i}^{[p]}}
\end{align*}

Therefore, by Proposition $2.2$ above we get that $A\otimes_{k}
\textbf{g}$ can be equipped with the structure of a restricted
Lie-Rinehart algebra and the $p$-map is given by:
$$(a\otimes g)^{[p]}=a^{p}\otimes g^{[p]}-(a\delta(g))^{p-1}(a)\otimes
g$$ By equation $(1.4)$, we see that the anchor map $\alpha$ is
actually a restricted Lie homomorphism.
\end{proof}

 In the next subsection we extend for the category $p-\rm{LR}(A)$ the
 notion of free Lie-Rinehart algebra defined by Casas-Pirashvili in \cite{Ca} and Kapranov in \cite{Kap}.

\subsection{Free restricted Lie-Rinehart algebra}
Let $A$ be a commutative $k$-algebra. We denote by
$\mathrm{Vect}/Der(A)$ the category whose objects are $k$-linear
morphisms $\psi: V \rightarrow Der(A)$ where $V\in \mathrm{Vect}$
and morphisms $f: \psi \rightarrow \psi'$ are $k$-linear morphisms
$f: V\rightarrow V'$ such that $\psi'f=\psi$. We denote by
$\mathcal{V}: p-\mathrm{LR}(A) \rightarrow \mathrm{Vect}/Der(A)$ the
forgetful functor from the category of restricted Lie-Rinehart
$A$-algebras to the category $\mathrm{Vect}/Der(A)$ which assigns a
restricted Lie-Rinehart algebra $L$ over $A$ to $\alpha:
L\rightarrow Der(A)$ the anchor map of $L$.

\begin{proposition}
There is a left adjoint functor $F : \mathrm{Vect}/Der(A)
\rightarrow p-\rm{LR}(A)$ to the functor $\mathcal{V}:
p-\mathrm{LR}(A) \rightarrow \mathrm{Vect}/Der(A)$
$$Hom_{p-\rm{LR}(A)}(F(\psi), L)\simeq
Hom_{\mathrm{Vect}/Der(A)}(\psi, \mathcal{V}(L))$$
\end{proposition}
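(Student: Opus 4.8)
The plan is to build $F$ by composing two left adjoints, exploiting the fact that $\mathcal{V}$ factors as $\mathcal{V}=\mathcal{V}_{0}\circ G$, where $G\colon p\text{-}\mathrm{LR}(A)\to \mathrm{LR}(A)$ forgets the $p$-map and $\mathcal{V}_{0}\colon \mathrm{LR}(A)\to \mathrm{Vect}/Der(A)$ is the ordinary forgetful functor. The functor $\mathcal{V}_{0}$ already has a known left adjoint, the free Lie-Rinehart algebra functor $F_{0}$ of Casas-Pirashvili \cite{Ca} and Kapranov \cite{Kap}. So the essential task is to produce a left adjoint $R\colon \mathrm{LR}(A)\to p\text{-}\mathrm{LR}(A)$ to $G$, i.e.\ a \emph{free restricted} functor on Lie-Rinehart algebras; then $F:=R\circ F_{0}$ will satisfy the required natural isomorphism by splicing together the two adjunction isomorphisms.

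For the construction of $R$ I would proceed in the spirit of the restricted enveloping machinery of Section~$2$. Given $L\in\mathrm{LR}(A)$, form Rinehart's enveloping algebra $U(A,L)$ and endow $U(A,L)_{Lie}$ with the Frobenius $p$-map $x\mapsto x^{p}$, making it a restricted Lie algebra as in Example~$1.2$. Define $R(L)$ to be the smallest $A$-submodule of $U(A,L)$ containing $\iota_{L}(L)$ and closed under the commutator bracket and under $(-)^{p}$. The key point is that $R(L)$ is genuinely an object of $p\text{-}\mathrm{LR}(A)$: it is a restricted Lie $k$-algebra by construction; relation $(1.5)$ holds on it because Hochschild's Lemma~$1$ in \cite{Hoch2} gives $(aX)^{p}=a^{p}X^{p}+(aX)^{p-1}(a)X$ inside $U(A,L)$ (this is also why $R(L)$ is stable under $A$-scaling); and the anchor $\xi\mapsto\bigl(a\mapsto[\xi,\iota_{A}(a)]\bigr)$ is well defined with values in $\iota_{A}(A)\cong A$, since $[\iota_{L}(X),\iota_{A}(a)]=\iota_{A}(X(a))$ on generators and since $\mathrm{ad}_{\xi^{p}}=(\mathrm{ad}_{\xi})^{p}$ keeps $\iota_{A}(A)$ invariant, so that the anchor is automatically a restricted homomorphism.

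The universal property of $R$ is then checked using the universal property of the enveloping algebras. Given a restricted Lie-Rinehart algebra $M$ and a Lie-Rinehart morphism $f\colon L\to G(M)$, compose with $i_{L}^{M}\colon M\to U_{p}(A,M)_{RLie}$ to view $f$ as a Lie-Rinehart map into the restricted Lie algebra $U_{p}(A,M)_{RLie}$; the universal property of $U(A,L)$ extends it to an associative algebra homomorphism $U(A,L)\to U_{p}(A,M)$, which respects Frobenius $p$-powers and hence restricts to a restricted Lie-Rinehart morphism on $R(L)$ with image in the restricted subalgebra generated by $i_{L}^{M}(M)$, namely $i_{L}^{M}(M)$ itself. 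Uniqueness is immediate because $R(L)$ is generated by $\iota_{L}(L)$ and the extension is forced on generators.

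The main obstacle I anticipate is purely technical: the argument above lands the extension in $i_{L}^{M}(M)$ rather than literally in $M$, so one must identify these, which is transparent when the underlying $A$-modules are free (Remark~$2.4$ and Proposition~$2.2$) but requires care in general. I would resolve this either by reducing to the free case through a presentation of $M$ and invoking the universal property of $U_{p}(A,L)$ directly (Proposition~$2.5$) instead of set-theoretic injectivity, or, as a clean alternative, by noting that $p\text{-}\mathrm{LR}(A)$ is the category of algebras over an algebraic theory and is locally presentable over $\mathrm{Vect}/Der(A)$, so that the adjoint functor theorem guarantees the existence of $F$; the explicit model $R\circ F_{0}$ then simply exhibits it.
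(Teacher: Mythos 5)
Your overall architecture---factoring $\mathcal{V}$ through $\mathrm{LR}(A)$ and composing the free Lie-Rinehart functor $F_{0}$ with a restrictification $R$ left adjoint to the $p$-map-forgetting functor $G$---is reasonable, but the verification of $R\dashv G$ has a genuine gap exactly where you flag it, and neither proposed repair closes it. To check the adjunction you must extend an arbitrary Lie-Rinehart morphism $f\colon L\to G(M)$ to a restricted morphism $R(L)\to M$, where $M$ ranges over \emph{all} of $p$-$\mathrm{LR}(A)$. Your route produces, via the universal property of $U(A,L)$, an associative algebra map $\Phi\colon U(A,L)\to U_{p}(A,M)$ with $\Phi(R(L))\subseteq i_{L}^{M}(M)$, and then needs to invert $i_{L}^{M}\colon M\to U_{p}(A,M)$ on its image. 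But the paper establishes injectivity of $i_{L}$ only when the underlying $A$-module is free (Theorem 2.1, Proposition 2.2, Remark 2.4); for a general target $M$ the map $i_{L}^{M}$ may have a kernel, so no morphism $R(L)\to M$ is obtained. Your first repair does not work: given a presentation $\widetilde{M}\twoheadrightarrow M$ with $\widetilde{M}$ free over $A$, a morphism $L\to M$ need not lift to $\widetilde{M}$, and Proposition 2.5 only governs maps \emph{out of} $U_{p}(A,M)$ into associative algebras, which is the same difficulty in disguise. Your second repair (adjoint functor theorem for algebras over an algebraic theory) would, if carried out carefully over the base $\mathrm{Vect}/Der(A)$, prove \emph{existence} of $F$ by an entirely different argument, but it does not certify that your explicit model $R\circ F_{0}$ computes it, which is the content of your construction. (A secondary issue: for $L$ not $A$-projective, $\iota_{L}$ itself may fail to be injective, so $R(L)$ need not even contain a copy of $L$; this does not affect the composite with $F_{0}$ but undermines the claim that $R$ is globally left adjoint to $G$.)

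The paper avoids all of this by never embedding the target into its enveloping algebra. It sets $F(\psi):=A\otimes_{k}L_{p}(V)$, where $L_{p}(V)$ is the free restricted Lie $k$-algebra on $V$; the $p$-map on this transformation Lie-Rinehart algebra is produced by Proposition 2.2 applied to the $A$-basis $\{1\otimes g_{i}\}$. Given $f\colon V\to L$ over $Der(A)$, the universal property of $L_{p}(V)$---a statement purely about restricted Lie $k$-algebras, valid for an arbitrary target---yields $\phi\colon L_{p}(V)\to L$, and the adjunct is defined directly by $f_{p}(a\otimes x):=a\phi(x)$. If you wish to keep your two-step factorization, the same device repairs it: describe $R(F_{0}(\psi))$ explicitly as $A\otimes_{k}L_{p}(V)$ and define the extension to $M$ on that $A$-basis, rather than through $U_{p}(A,M)$.
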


\begin{proof}
Let $\psi: V\rightarrow Der(A)$. Then, by the universal property of
the free restricted Lie algebra $L_{p}(V)$ generated by $V$ there is
a restricted Lie homomorphism $\Phi: L_{p}(V)\rightarrow Der(A)$
such that $\Phi i_{V}=\psi$, where $i_{V}: V\rightarrow L_{p}(V)$
denotes the canonical map . By Proposition $2.2$, we get that
$A\otimes L_{p}(V)$ is equipped with the structure of a restricted
Lie-Rinehart algebra. Therefore, we construct a functor $F :
\mathrm{Vect}/Der(A) \rightarrow p-\rm{LR}$ which assigns $\psi \in
\mathrm{Vect}/Der(A)$ to $A\otimes_{k} L_{p}(V)$.

Let $f\in Hom_{\mathrm{Vect}/Der(A)}(\psi, \mathcal{V}(L))$. Then,
we have that $\alpha f=\psi$. Moreover, by the universal property of
the free restricted Lie algebra $L_{p}(V)$ generated by $V$, there
is a restricted Lie homomorphism $\phi: L_{p}(V)\rightarrow L$ such
that $\phi i_{V}=f$ and $\Phi=\alpha \phi$. Let $f_{p}: A\otimes_{k}
L_{p}(V) \rightarrow L$ be the homomorphism of restricted
Lie-Rinehart algebras given by $f_{p}(a\otimes x):=a\phi(x)$, where
$a\in A$ and $x\in L_{p}(V)$. Thus, we construct a map $f\mapsto
f_{p}$. Conversely, for $f_{p}\in Hom_{p-\rm{LR}(A)}(F(\psi), L)$ we
consider the $k$-linear map $f: V\rightarrow L$ given by
$f:=f_{p}\bar{i}$ where $\bar{i}: V \rightarrow A\otimes_{k}
L_{p}(V)$ given by $v \mapsto (1\otimes v)$, and $v\in V$. We easily
see that the maps $f \mapsto f_{p}$ and $f_{p} \mapsto f$ are
inverse to each other.
\end{proof}

\section{Beck modules and Beck derivations}

Beck in his desertion (see \cite{Beck}) gave an answer of what
should be the right notion of coefficient module for cohomology. The
notion of Beck-module encompasses for various categories, the usual
known notions of coefficient module for cohomology (see \cite{Bar}).
In this section we determine the category of Beck modules and the
group of Beck derivations (see \cite{Beck}, \cite{BB}) for the
category of restricted Lie-Rinehart algebras $p$-$\rm{LR}(A)$ over a
commutative algebra $A$.

\begin{definition}
Let $L$ be an object in a category $p$-$\rm{LR}(A)$. We denote by
$(p$-$\rm{LR}(A)/L)_{ab}$ the category of abelian group objects of
the comma category $p$-$\rm{LR}(A)/L$ and by $I_{L}:
(p$-$\rm{LR}(A)/L)_{ab} \rightarrow p$-$\rm{LR}(A)/L$ the
forgetful functor. An object $M \in(p$-$\rm{LR}(A)/L)_{ab}$ is
called a \textit{Beck} $L$-\textit{module}. Let $\emph{g} \in
p$-$\rm{LR}(A)/L$ and $M$ a Beck-$L$-module. The group
$Hom_{p-\rm{LR}(A)/L}(\emph{g},I_{L}(M))$ is called the
\textit{group of Beck derivations} of $g$ by $M$.
\end{definition}

\begin{notation}
Let $M \xrightarrow{\mu} L$ be a Beck $L$-module. We denote by
$\bar{M}:=ker\,\mu$ and by $p_{\bar{M}}$ the restriction of the
$p$-map $p_{M}$ of $M$ to $\bar{M}$.
\end{notation}
\begin{theorem}
Let $L\in p$-$\rm{LR}(A)$ and $M \xrightarrow{\mu} L$ be Beck
$L$-module. Then, $\bar{M}\rtimes_{p_{\bar{M}}} L$ is defined and
endowed with the structure of a restricted Lie-Rinehart algebra.
Moreover, there is an isomorphism of restricted Lie-Rinehart
algebras:
$$\bar{M}\rtimes_{p_{\bar{M}}} L\simeq M$$
\end{theorem}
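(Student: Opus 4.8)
The plan is to exploit the group-object structure to split $M$ and then identify it with the twisted semidirect product of Remark $1.4$. Write $s\colon L\to M$ for the unit (zero section) and $\sigma\colon M\times_{L}M\to M$ for the addition of the abelian group object $M$; both are morphisms in $p$-$\mathrm{LR}(A)/L$, hence morphisms of restricted Lie-Rinehart algebras. In particular $\mu s=\mathrm{id}_{L}$, so $s$ is an $A$-linear section of $\mu$ and we obtain an $A$-module decomposition $M=\bar{M}\oplus s(L)$. Since a morphism in $p$-$\mathrm{LR}(A)$ over $\mathrm{id}_{A}$ commutes with anchors, $\alpha_{M}=\alpha_{L}\circ\mu$; hence every $m\in\bar{M}$ has zero anchor, i.e. $m(a)=0$ for all $a\in A$, and $\bar{M}$ is an $A$-submodule. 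As $\mu$ is a restricted Lie homomorphism, $\bar{M}$ is stable under $p_{M}$, so $p_{\bar{M}}$ is well defined, and I give $\bar{M}$ the $L$-action $X\cdot m:=[s(X),m]_{M}$.

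First I would show that $\bar{M}$ is an abelian ideal and a restricted Lie-Rinehart $L$-module. Abelianness is the usual Eckmann-Hilton argument: for $m,n\in\bar{M}$ the pairs $(m,0)$ and $(0,n)$ lie in $M\times_{L}M$, and applying the bracket-preserving map $\sigma$ to $[(m,0),(0,n)]=(0,0)$ yields $[m,n]_{M}=0$. That $\bar{M}$ is an ideal follows from $\mu([s(X),m]_{M})=[X,0]_{L}=0$. The Lie-Rinehart module axioms $(aX)m=a(Xm)$ and $X(am)=aX(m)+X(a)m$ then follow from the Lie-Rinehart identity in $M$, the $A$-linearity of $s$, and $\alpha_{M}(m)=0$, while the restricted axiom $X^{[p_{L}]}m=\underbrace{X(X(\cdots(X}_{p}m)))$ follows from $s(X^{[p_{L}]})=s(X)^{[p_{M}]}$ together with relation $(1.2)$.

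Next I would verify that $\bar{M}\rtimes_{p_{\bar{M}}}L$ is actually defined, i.e. that $p_{\bar{M}}$ is $p$-semilinear with image in the invariants $\bar{M}^{L}$. Semilinearity is immediate from relation $(1.5)$ applied in $M$ to $am$: since $\alpha_{M}(am)=0$ the correction term $(am)^{p-1}(a)m$ vanishes, so $p_{\bar{M}}(am)=a^{p}p_{\bar{M}}(m)$. For the invariance, relation $(1.2)$ gives $[s(X),m^{[p_{M}]}]_{M}=\underbrace{[\cdots[[s(X),m]_{M},m]_{M}\cdots,m]_{M}}_{p}$, and this iterated bracket vanishes because after the first step every entry lies in the abelian ideal $\bar{M}$; hence $X\cdot p_{\bar{M}}(m)=0$ and $p_{\bar{M}}(m)\in\bar{M}^{L}$. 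Thus $\bar{M}\rtimes_{p_{\bar{M}}}L$ is a well-defined restricted Lie-Rinehart algebra.

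Finally I would define $\phi\colon\bar{M}\rtimes_{p_{\bar{M}}}L\to M$ by $\phi(m+X)=m+s(X)$ and prove it is an isomorphism. It is an $A$-linear bijection by the decomposition $M=\bar{M}\oplus s(L)$, and it preserves the anchor (both sides give $\alpha_{L}(X)$). It preserves the bracket: expanding $[m+s(X),n+s(Y)]_{M}$, the term $[m,n]_{M}$ vanishes by abelianness, the mixed terms give $X\cdot n-Y\cdot m$, and $[s(X),s(Y)]_{M}=s([X,Y]_{L})$ since $s$ is a morphism. The crux is that $\phi$ preserves the $p$-map. By relation $(1.3)$ in $M$ one has $(m+s(X))^{[p_{M}]}=p_{\bar{M}}(m)+s(X^{[p_{L}]})+\sum_{i=1}^{p-1}s_{i}(m,s(X))$, so everything reduces to evaluating $\sum_{i}s_{i}(m,s(X))$. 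Since $ad_{\lambda m+s(X)}^{p-1}(m)$ stays inside the abelian ideal $\bar{M}$ after the first step, it is independent of $\lambda$ and equals $\underbrace{X(X(\cdots(X}_{p-1}(m)))$; hence only $s_{1}$ survives and the sum equals $\underbrace{X(X(\cdots(X}_{p-1}(m)))$, matching exactly the $p$-map of $\bar{M}\rtimes_{p_{\bar{M}}}L$. The hard part is precisely this $\lambda$-independence computation for the $s_{i}$, since it is what forces the twist by $p_{\bar{M}}$ to be correct; the other delicate point is establishing $p_{\bar{M}}(m)\in\bar{M}^{L}$.
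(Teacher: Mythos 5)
Your proposal is correct and follows essentially the same route as the paper: split $M$ via the zero section $z=s$, show $\bar{M}=\ker\mu$ is an abelian restricted ideal with trivial anchor so that $p_{\bar{M}}$ lands in $\bar{M}^{L}$, and transport the restricted Lie--Rinehart structure through the bijection $\bar{m}+X\mapsto \bar{m}+s(X)$, with the key point being the collapse of $\sum_{i}s_{i}$ to the single term $ad_{s(X)}^{p-1}(\bar{m})$. The only difference is cosmetic: where the paper cites the known equivalence of $(\mathrm{Lie}/L)_{ab}$ with Lie $L$-modules to get the split abelian extension, you re-derive those facts directly from the group-object structure (Eckmann--Hilton for abelianness), and you spell out the $\lambda$-independence in the $s_{i}$ computation that the paper states in one line.
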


\begin{proof}
Since $M \xrightarrow{\mu} L$ is an abelian group object in
$(p$-$\rm{LR}(A)/L)_{ab}$ a fortiori is an abelian group object in
$(Lie/L)_{ab}$, where $Lie/L$ denotes the slice category of Lie
algebras over $L$. It is well known that the category of abelian
group objects $(Lie/L)_{ab}$ is equivalent to the category of Lie
$L$-modules. In particular if $z: L\rightarrow M$ denotes the zero
map for the structure of group object we have a split extension in
the category of Lie algebras:
$$0\rightarrow \bar{M}\rightarrow M\xrightarrow[\xleftarrow{z}]{\mu} L\rightarrow
0$$

Moreover, there is an isomorphism of Lie algebras $\psi:
\bar{M}\oplus L\simeq M$ given by $\psi(\bar{m}+X):=\bar{m}+z(X)$,
for all $\bar{m}\in \bar{M}$ and $X\in L$. Since $z$ is a restricted
Lie homomorphism we have:

\begin{align*}
X^{[p_{L}]}\bar{m} &=[z(X^{[p_{L}]}),\bar{m}]\\
                   &=[z(X)^{[p_{M}]},\bar{m}]\\
                   &=[\underbrace{z(X),\cdots
                   [z(X),[z(X)}_{p},\bar{m}]]\cdots]
\end{align*}
Therefore, $\bar{M}$ is endowed with the structure of restricted
$L$-module. Besides $\bar{M}$ is abelian thus, $$[z(X),
\bar{m}^{[p_{\bar{M}}]}]=[z(X),\underbrace{\bar{m}]...,\bar{m}}_{p}]..]=0$$
Therefore, $p_{\bar{M}}: \bar{M}\rightarrow \bar{M}^{L}$ and
$\bar{M}\rtimes_{p_{\bar{M}}}L$ is defined. Since $\bar{M}$ is
abelian
\begin{align*}
(\psi(X+\bar{m}))^{p_{M}}
&=z(X)^{p_{M}}+\bar{m}^{p_{M}}+\sum_{i=1}^{i=p-1}s_{i}(z(X),\bar{m})\\
&=z(X)^{p_{M}}+\bar{m}^{p_{M}}+ad_{z(X)}^{p-1}(\bar{m})
\end{align*}
Therefore, the isomorphism $\psi$ is an isomorphism of a restricted
Lie algebras $$\psi: \bar{M}\rtimes_{p_{\bar{M}}} L\simeq M$$
 The $k$-module $\bar{M}\rtimes_{p_{\bar{M}}} L$ has the structure
 of an $A$-module given by the formula $$a(\bar{m}+X):=a\bar{m}+aX,\;\;a\in A$$ and
$\psi$ is an $A$-module homomorphism. We easily see that,
$\bar{M}\rtimes_{p_{\bar{M}}} L$ is endowed with the structure of a
Lie-Rinehart algebra with anchor map $\alpha:
\bar{M}\rtimes_{p_{\bar{M}}} L \rightarrow Der_{k}(A) $ given by
$$\alpha (\bar{m}+X)(a):=X(a)=z(X)(a)$$
In this way, $\psi$ becomes a Lie-Rinehart isomorphism. Besides, we
have
\begin{align*}
\psi((a(X+\bar{m}))^{[p]}) &=(\psi(a(X+\bar{m})))^{[p]}\\
                            &=(a(\psi(X+\bar{m})))^{[p]}\\
                            &=a^{p}(\psi(X+\bar{m}))^{[p]}+(a\psi (X+\bar{m}))^{p-1}(a)\psi( X+\bar{m})\\
                            &=a^{p}(\psi(X+\bar{m})^{[p]}))+(\psi(a(X+\bar{m}))^{p-1}(a)(\psi(X+\bar{m})))\\
                            &=\psi(a^{p}(X+\bar{m})^{[p]})+(a(z(X)+\bar{m}))^{p-1}(a)\psi(X+\bar{m}))\\
                            &=\psi(a^{p}(X+\bar{m})^{[p]})+\psi((a(z(X)+\bar{m}))^{p-1}(a)(X+\bar{m}))\\
                            &=\psi(a^{p}(X+\bar{m})^{[p]}+(a(X+\bar{m}))^{p-1}(a)(X+\bar{m}))\\
\end{align*}
Therefore, $M \rtimes_{p_{\bar{M}}} L$ is a restricted Lie-Rinehart
algebra and $\psi: \bar{M}\rtimes_{p_{\bar{M}}} L \simeq M$ is a
restricted Lie-Rinehart isomorphism.
\end{proof}

\begin{lemma}
Let $L$ be a Lie-Rinehart algebra over $A$. Then, the following
relation holds
$$(aX)^{p-1}(ab)=a^{p}X^{p-1}(b)+(aX)^{p-1}(a)b$$
for all $X\in p$-$\rm{LR}(A)$ and $a,b\in A$.
\end{lemma}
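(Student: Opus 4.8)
The plan is to strip the statement down to a fact about a single derivation of $A$ and then extract it from Hochschild's relation $(1.4)$ applied in a one‑variable polynomial extension of $A$. First I would observe that every expression occurring in the identity depends only on the derivation $\delta:=\alpha(X)\in\mathrm{Der}_k(A)$ coming from the anchor: $X(b)$ means $\delta(b)$, and because the anchor is $A$‑linear, $aX$ acts on $A$ as the derivation $a\delta$. Thus the Lie–Rinehart structure on $L$ plays no role, and it suffices to prove, for a derivation $\delta$ of the commutative characteristic $p$ algebra $A$ and elements $a,b\in A$, the relation
\[
(a\delta)^{p-1}(ab)=a^{p}\,\delta^{p-1}(b)+(a\delta)^{p-1}(a)\,b .
\]
It is worth noting up front that this is genuinely a characteristic $p$ statement: a naive Leibniz expansion of $(a\delta)^{n}(ab)$ produces extra ``middle'' terms that survive for general $n$ and only cancel modulo $p$ when $n=p-1$. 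This is precisely why I would route the argument through the $p$‑th power formula $(1.4)$ rather than through iterated Leibniz.

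The key idea is to realize the product $ab$ as the image of one fresh variable under a derivation. Form the polynomial ring $R:=A[y]$ and extend $\delta$ to the unique $k$‑derivation $\delta'$ of $R$ with $\delta'|_{A}=\delta$ and $\delta'(y)=b$. Then $\delta'$ preserves $A$ and restricts there to $\delta$, so $a\delta'$ preserves $A$ and agrees on $A$ with $a\delta$; in particular $(a\delta')^{p-1}$ and $(a\delta)^{p-1}$ coincide on $A$. Moreover $\delta'^{\,j}(y)=\delta^{\,j-1}(b)$ for $j\ge 1$, and $(a\delta')(y)=a\,\delta'(y)=ab\in A$. Consequently
\[
(a\delta')^{p}(y)=(a\delta')^{p-1}\bigl((a\delta')(y)\bigr)=(a\delta')^{p-1}(ab)=(a\delta)^{p-1}(ab),
\]
so the left‑hand side of the desired identity is exactly $(a\delta')^{p}(y)$.

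Now I would apply Hochschild's relation $(1.4)$ to the derivation $a\delta'$ of the commutative characteristic $p$ algebra $R$, namely $(a\delta')^{p}=a^{p}(\delta')^{p}+(a\delta')^{p-1}(a)\,\delta'$, and evaluate both sides at $y$. The left‑hand side is $(a\delta)^{p-1}(ab)$ by the previous step; the first term on the right is $a^{p}\,\delta'^{\,p}(y)=a^{p}\,\delta^{p-1}(b)$; and the second term is $(a\delta')^{p-1}(a)\,\delta'(y)=(a\delta)^{p-1}(a)\,b$, using that $(a\delta')^{p-1}(a)=(a\delta)^{p-1}(a)$ and $\delta'(y)=b$. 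Comparing the three pieces yields the claim. The conceptual step, and the only point that requires any thought, is the encoding of $ab$ as $(a\delta')(y)$; the remaining verifications—that $a\delta'$ is honestly a $k$‑derivation of $R$ so that $(1.4)$ applies verbatim, and the bookkeeping $\delta'^{\,p}(y)=\delta^{p-1}(b)$—are routine, and all the characteristic $p$ content is carried by the single invocation of $(1.4)$.
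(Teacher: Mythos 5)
Your proof is correct, but it takes a genuinely different route from the paper's. The paper works inside the enveloping algebra $U(A,L)$: it expands $(a(b+i_{L}(X)))^{p}$ in two ways --- once as $a^{p}(b+i_{L}(X))^{p}+(aX)^{p-1}(a)(b+i_{L}(X))$ via Hochschild's Lemma $1$ applied to the element $b+i_{L}(X)$, using Jacobson's formula to compute $(b+i_{L}(X))^{p}=b^{p}+i_{L}(X)^{p}+X^{p-1}(b)$, and once as $(ab+ai_{L}(X))^{p}$ --- and reads off the identity by comparing the two expansions. You instead reduce everything to the single derivation $\delta=\alpha(X)$ of $A$ (legitimate, since the anchor is $A$-linear and every term in the identity only involves the action on $A$), and then encode the product $ab$ as $(a\delta')(y)$ for the extension $\delta'$ of $\delta$ to $A[y]$ with $\delta'(y)=b$, so that a single application of relation $(1.4)$ to $a\delta'$ evaluated at $y$ yields the claim. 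The two arguments are parallel in spirit --- both introduce an auxiliary carrier for $b$ (the element $b+i_{L}(X)$ of $U(A,L)$ in the paper, the variable $y$ in yours) and both lean entirely on Hochschild's $p$-th power formula --- but yours stays within commutative algebras and honest derivations, avoids the enveloping algebra and the implicit appeal to $\iota_{A}$ being injective, and does not need Jacobson's sum formula $(x+y)^{[p]}=x^{[p]}+y^{[p]}+\sum s_{i}(x,y)$; the paper's version has the advantage of producing the identity directly as a relation in $U(A,L)$, which is the ambient object used throughout Section $2$. All the verifications you defer (that $a\delta'$ is a derivation of $A[y]$, that $(1.4)$ applies to the commutative algebra $A[y]$, and that $\delta'^{\,p}(y)=\delta^{p-1}(b)$) are indeed routine.
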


\begin{proof}
We consider the restricted enveloping algebra $U(A,L)$ of the
Lie-Rinehart algebra $L$ over $A$. By Hochschild's Lemma $1$, we get
in $U(A,L)$ that:
\begin{align*}
(a(b+i_{L}(X)))^{p} &=a^{p}(b+i_{L}(X))^{p}+(aX))^{p-1}(a)(b+i_{L}(X))\\
             &=a^{p}(b^{p}+i_{L}(X)^{p}+X^{p-1}(b))+(aX)^{p-1}(a)b+(aX)^{p-1}(a)i_{L}(X)
\end{align*}
Besides,
\begin{align*}
(a(b+i_{L}(X)))^{p} &=(ab+ai_{L}(X))^{p}\\
             &=((ab)^{p}+ai_{L}(X))^{p}+(aX)^{p-1}(ab)\\
             &=a^{p}b^{p}+a^{p}i_{L}(X)^{p}+(aX)^{p-1}(a)i_{L}(X)+(aX)^{p-1}(ab)
\end{align*}
Therefore, we get:
$$(aX)^{p-1}(ab)=a^{p}X^{p-1}(b)+(aX)^{p-1}(a)b$$
for all $X\in L$ and $a,b\in A$.
\end{proof}

Let $L$ be a Lie-Rinehart algebra over $A$. Let $M$ be a
Lie-Rinehart $(A-L)$-module. We consider the commutative algebra
semi-direct product $A\oplus M$ with product given by:
$$(a\oplus m)(a'\oplus m'):=aa'\oplus (am'+a'm)$$
There is an anchor map $\alpha: L\rightarrow Der(A\oplus M)$ given
by
$$\alpha(X)(a\oplus m):=X(a)\oplus X(m)$$ for all $X\in L, a\in A$
and $m\in M$. Moreover, $L$ becomes an $A\oplus M$-module via the
action $(a\oplus m)X:=aX$. Then, we easily see that, $L$ becomes a
Lie-Rinehart algebra over $A\oplus M$. Therefore, by Lemma $3.4$
above we get:
\begin{equation}
(aX)^{p-1}(am)=a^{p}X^{p-1}(m)+(aX)^{p-1}(a)m
\end{equation}
where $X\in L$ and $a\in A, m\in M$.

\begin{proposition}
Let $L$ be a restricted Lie-Rinehart algebra over $A$ and $\bar{M}$
a restricted Lie-Rinehart module. Then, $\bar{M}\rtimes L$ is
endowed with the structure of restricted Lie-Rinehart algebra.
\end{proposition}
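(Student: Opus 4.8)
The plan is to realize $\bar{M}\rtimes L$ through Proposition $1.3$ and then verify, one at a time, the three remaining ingredients of Definition $1.7$: the Lie-Rinehart compatibility, the fact that the anchor is a restricted homomorphism, and the crucial relation $(1.6)$.

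First I would invoke Proposition $1.3$. Since $\bar{M}$ is a restricted Lie-Rinehart module it is in particular a restricted Lie $L$-module, so the vector space $\bar{M}\oplus L$ carries the restricted Lie algebra structure with bracket $[\bar m+X,\bar m'+Y]=(X(\bar m')-Y(\bar m))+[X,Y]$ and $p$-map $(\bar m+X)^{[p]}=X^{p-1}(\bar m)+X^{[p]}$; by definition this is $\bar M\rtimes L$. Next I would equip it with the $A$-module structure $a(\bar m+X):=a\bar m+aX$ and the anchor $\alpha(\bar m+X)(a):=X(a)$, so that only the $L$-component acts on $A$. Checking that this data forms a Lie-Rinehart algebra over $A$ is then routine and parallels the corresponding verification in the proof of Theorem $3.3$, using the module axioms of $\bar M$ together with the Lie-Rinehart identity for $L$. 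That the anchor is a restricted Lie homomorphism is immediate: the $\bar M$-component never contributes to $\alpha$, so $\alpha((\bar m+X)^{[p]})=\alpha(X^{[p]})=\alpha(X)^{[p]}=\alpha(\bar m+X)^{[p]}$, the middle equality holding because the anchor of $L$ is already restricted.

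The substantive step, and the one I expect to be the main obstacle, is verifying relation $(1.6)$, namely
$$(a(\bar m+X))^{[p]}=a^{p}(\bar m+X)^{[p]}+(a(\bar m+X))^{p-1}(a)(\bar m+X).$$
Expanding the left-hand side via the $p$-map of Proposition $1.3$ gives $(a\bar m+aX)^{[p]}=(aX)^{p-1}(a\bar m)+(aX)^{[p]}$, where the first term is the $(p-1)$-fold module action of $aX$ on $a\bar m$ and the second is the $p$-map in $L$. Here I would apply equation $(3.1)$ (the consequence of Lemma $3.4$) to rewrite $(aX)^{p-1}(a\bar m)=a^{p}X^{p-1}(\bar m)+(aX)^{p-1}(a)\bar m$, and the restricted Lie-Rinehart relation of $L$ to rewrite $(aX)^{[p]}=a^{p}X^{[p]}+(aX)^{p-1}(a)X$. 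Expanding the right-hand side with the $A$-module structure, and noting that the anchor of $a(\bar m+X)$ is $aX$ so that $(a(\bar m+X))^{p-1}(a)=(aX)^{p-1}(a)$, produces exactly the same four terms $a^{p}X^{p-1}(\bar m)$, $a^{p}X^{[p]}$, $(aX)^{p-1}(a)\bar m$ and $(aX)^{p-1}(a)X$. Matching the two sides completes the proof. The only delicate point is the bookkeeping of the module action against the anchor action in the mixed term $(aX)^{p-1}(a\bar m)$; equation $(3.1)$ is precisely the identity that disentangles them, which is why Lemma $3.4$ was established beforehand.
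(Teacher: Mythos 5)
Your proof is correct and follows essentially the same route as the paper: the same $A$-module structure and anchor on $\bar{M}\oplus L$, and the same verification of the $p$-map compatibility by expanding $(a(\bar{m}+X))^{[p]}=(aX)^{[p]}+(aX)^{p-1}(a\bar{m})$ and applying equation $(3.1)$ together with relation $(1.5)$ for $L$. (Only a trivial slip: the defining relation of a restricted Lie-Rinehart algebra is numbered $(1.5)$ in the paper, not $(1.6)$.)
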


\begin{proof}
We observe that $\bar{M}\rtimes L$ is an $A$-module via the action
$a(\bar{m}+X):=a\bar{m}+aX$ where $a\in A,\;\bar{m}\in\bar{M}$ and
$X\in L$. Moreover, there is anchor map $\alpha:\bar{M}\rtimes L
\rightarrow Der(A)$ given by $\alpha((\bar{m}+X))(a):=X(a)$. We
easily see that $\bar{M}\rtimes L$ is endowed with the structure of
Lie-Rinehart algebra over $A$. Besides, by the relation $(3.1)$
above we get
\begin{align*}
(a(\bar{m}+X))^{[p]} &=(aX)^{[p]}+(aX)^{p-1}(a\bar{m})\\
                     &=a^{p}X^{[p]}+(aX)^{p-1}(a)X+a^{p}X^{p-1}(\bar{m})+(aX)^{p-1}(a)\bar{m}\\
                     &=a^{p}(\bar{m}+X)^{[p]}+(a(\bar{m}+X))^{p-1}(a)(\bar{m}+X)
\end{align*}
Thus, $\bar{M}\rtimes L$ is a restricted Lie-Rinehart algebra.
\end{proof}

Let $A[P]$ be the polynomial ring given by
$$A[P]:=\{\sum_{i=0}^{i=n}a_{i}P^{i},: Pa=a^{p}P\;\; \text{for all}\;\; a_{i},a \in A\}$$

We consider the ring $W(A,L)$ which as an $A$-module is given by
$$W(A,L):=A[P]\otimes_{A} U_{p}(A,L)$$ and such that $A[P]\rightarrow
W(A,L)$ and $U_{p}(A,L)\rightarrow W(A,L)$ are $A$-algebra
homomorphisms and the multiplication is such that
\begin{align}
(P\otimes 1)(1\otimes i_{L}(X)):&=P\otimes i_{L}(X)\\
(1\otimes i_{L}(X))(P\otimes 1):&=0
\end{align}

\begin{proposition}
Let $L$ be a restricted Lie-Rinehart algebra over $A$. Then, the
category of Beck $L$-modules is equivalent to the category of
$W(A,L)$-modules.
\end{proposition}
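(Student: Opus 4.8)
The plan is to route the classification of Beck modules through Theorem 3.3 and the equivalence between restricted Lie-Rinehart modules and $U_{p}(A,L)$-modules, and then to recognize a $W(A,L)$-module as exactly a $U_{p}(A,L)$-module carrying the extra $p$-semilinear operation encoded by $P$.

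First I would invoke Theorem 3.3 to replace a Beck $L$-module $M\xrightarrow{\mu}L$ by the pair consisting of $\bar{M}=\ker\mu$, regarded as a restricted Lie-Rinehart $(A,L)$-module, together with the restriction $p_{\bar{M}}$ of the $p$-map. As in the proof of Theorem 3.3, $\bar{M}$ is abelian, so $p_{\bar{M}}$ is additive and, by axiom $(1.1)$, $p$-semilinear over $k$; since the anchor vanishes on $\bar{M}$, relation $(1.5)$ collapses to $(a\bar{m})^{[p]}=a^{p}\bar{m}^{[p]}$, so $p_{\bar{M}}$ is in fact $p$-semilinear over $A$, and its image lies in the invariants $\bar{M}^{L}$. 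Conversely such a pair produces the Beck module $\bar{M}\rtimes_{p_{\bar{M}}}L$. Thus Beck $L$-modules are equivalent to pairs $(\bar{M},p_{\bar{M}})$ with $\bar{M}$ a restricted Lie-Rinehart module and $p_{\bar{M}}\colon\bar{M}\to\bar{M}^{L}$ an $A$-$p$-semilinear map; a morphism of Beck modules restricts to an $L$-module map of kernels commuting with the $p$-maps.

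Next I would use the equivalence recorded after Definition 2.7 between restricted Lie-Rinehart $(A,L)$-modules and $U_{p}(A,L)$-modules, so that $\bar{M}$ becomes a $U_{p}(A,L)$-module $N$ and $\bar{M}^{L}$ becomes $N^{L}=\{n\in N : i_{L}(X)n=0\ \text{for all}\ X\in L\}$. The core of the argument is then to identify a $W(A,L)$-module with precisely a $U_{p}(A,L)$-module $N$ equipped with an $A$-$p$-semilinear map $P\colon N\to N^{L}$. Since $W(A,L)=A[P]\otimes_{A}U_{p}(A,L)$ is generated as an algebra by the two subalgebras $A[P]$ and $U_{p}(A,L)$---every element being a sum of products $(P^{i}\otimes 1)(1\otimes u)$---a $W(A,L)$-module is the same as an $A$-module carrying compatible actions of both. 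The $U_{p}(A,L)$-action furnishes the restricted Lie-Rinehart structure, while the generator $P$ furnishes an operator whose semilinearity $P(an)=a^{p}(Pn)$ is exactly the ring relation $Pa=a^{p}P$. The relation $(1\otimes i_{L}(X))(P\otimes 1)=0$ becomes $i_{L}(X)(Pn)=0$ for all $X\in L$, that is $P(N)\subseteq N^{L}$, while the companion relation $(P\otimes 1)(1\otimes i_{L}(X))=P\otimes i_{L}(X)$ merely records that multiplying the $P$-part on the left by the $L$-part on the right gives the expected tensor and imposes no further constraint. Composing the two equivalences identifies Beck $L$-modules with $W(A,L)$-modules, morphisms corresponding on both sides.

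I expect the main obstacle to lie in checking that the two defining relations of the multiplication on $W(A,L)$ are neither too weak nor too strong: one must verify that the free $A$-module decomposition $A[P]\otimes_{A}U_{p}(A,L)$ is respected by any module action, so that prescribing the $U_{p}(A,L)$-action together with the single operator $P$, subject only to $A$-$p$-semilinearity and $P(N)\subseteq N^{L}$, determines a well-defined and unique $W(A,L)$-module structure with no hidden relations among the elements $P^{i}\otimes u$. Granting this, the identification with the pairs $(\bar{M},p_{\bar{M}})$ of the first step is immediate.
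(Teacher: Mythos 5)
Your proposal follows essentially the same route as the paper: both directions pass through Theorem 3.3 to trade a Beck $L$-module for the pair $(\bar{M},p_{\bar{M}})$, and then identify such a pair with a $U_{p}(A,L)$-module carrying the $A$-$p$-semilinear operator $P$ landing in $\bar{M}^{L}$, i.e.\ a $W(A,L)$-module (the paper invokes Proposition 3.5 and Remark 1.4 for the converse construction of $\bar{M}\rtimes_{p_{\bar{M}}}L$, exactly as you do). Your extra checks --- that relation $(1.5)$ collapses to $(a\bar{m})^{[p]}=a^{p}\bar{m}^{[p]}$ on the kernel and that the two defining relations of $W(A,L)$ encode precisely semilinearity and $P(N)\subseteq N^{L}$ --- are correct elaborations of steps the paper leaves implicit.
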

\begin{proof}
Let $M$ be a $W(A,L)$-module. Using the homomorphism
$U_{p}(A,L)\rightarrow W(A,L)$ we can see $M$ as a
$U_{p}(A,L)$-module which we denote by $\bar{M}$. Besides, the
$A[P]$ action endows $\bar{M}$ with a $p$-semi-linear map
$p_{\bar{M}}: \bar{M} \rightarrow \bar{M}^{L}$ given by
$$p_{\bar{M}}(\bar{m}):=P\bar{m},\;\;\bar{m}\in \bar{M}$$ From
Proposition $3.5$ and Remark $1.4$ we see that
$\bar{M}\rtimes_{p_{\bar{M}}}L$ is a restricted Lie-Rinehart
algebra. Thus, by Theorem $3.3$, any $W(A,L)$-module $M$ is
associated to the Beck $L$-module $\bar{M}\rtimes_{p_{\bar{M}}}L$.
Conversely, let $M$ be a Beck $L$-module, then by Theorem $3.3$ we
have that $M\simeq \bar{M}\rtimes_{p_{\bar{M}}} L$. Moreover, we
observe that $\bar{M}$ is a $W(A,L)$-module. Thus, we have an
equivalence of categories.
\end{proof}

\subsection{Beck derivations}

Let $g$ be a restricted Lie-Rinehart algebra and
$M=\bar{M}\rtimes_{p_{\bar{M}}}L$ be a Beck $g$-module. Then,
$\bar{M}$ is a Lie-Rinehart $g$-module. We denote by
$Der(g,\bar{M})$ the group of Lie-Rinehart derivations. We recall
that a $k$-Lie algebra derivation $d: g\rightarrow \bar{M}$ is
called Lie Rinehart if $d$ is $A$-linear. The group of \textit{Beck
derivations} is defined as follows:
$$Der_{p}(g,M):=\{d\in Der_{A}(g,\bar{M}) : d(X^{[p]})=\underbrace{X\cdots
X}_{p-1}d(X)+(d(X))^{[p_{\bar{M}}]},\;\;X\in g\}$$
 We note that $Der_{p}(g,M)$ is a group under the addition since $p_{\bar{M}}$ is a $p$-semi-linear
map.

\begin{proposition}
Let $L$ be a restricted Lie-Rinehart algebra over $A$ and $g\in
p$-$\rm{LR}(A)/L$. If $M$ is a Beck $L$-module, then we have the
following isomorphism
$$Hom_{p-\rm{LR}(A)/L}(g,\bar{M}\rtimes_{p_{\bar{M}}} L )\simeq
Der_{p}(g,M)$$
\end{proposition}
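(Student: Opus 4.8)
The plan is to exhibit an explicit bijection between morphisms $g \to I_L(M)$ in the slice category $p\text{-}\mathrm{LR}(A)/L$ and Beck derivations $d \in Der_p(g,M)$, and then check it respects the group structures on both sides. By Theorem $3.3$ we may identify the Beck $L$-module $M$ with $\bar{M}\rtimes_{p_{\bar{M}}} L$, so that $I_L(M) = \bar{M}\rtimes_{p_{\bar{M}}} L$ sits over $L$ via the projection $\mu$ onto the $L$-component. A morphism $\phi \in Hom_{p\text{-}\mathrm{LR}(A)/L}(g, \bar{M}\rtimes_{p_{\bar{M}}} L)$ is a restricted Lie-Rinehart morphism over $L$, so it must have the form $\phi(X) = (d(X), \beta(X))$ where $\beta: g \to L$ is the given structure map of $g$ as an object of the slice category; the condition $\mu\phi = \beta$ forces the $L$-component to be $\beta$ itself. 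Thus $\phi$ is determined by the first component $d: g \to \bar{M}$, and I would define the correspondence by $\phi \mapsto d$ and $d \mapsto \phi_d$ where $\phi_d(X) := (d(X), \beta(X))$.

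First I would verify that $\phi_d$ being a Lie-Rinehart morphism over $L$ is equivalent to $d$ being an $A$-linear Lie algebra derivation, i.e.\ $d \in Der_A(g,\bar{M})$. Unpacking the bracket in the semi-direct product $\bar{M}\rtimes_{p_{\bar{M}}} L$ (which restricted to $\bar{M}\oplus L$ is the Lie algebra bracket from Proposition $1.3$), the requirement that $\phi_d$ preserve brackets yields exactly the Lie derivation identity $d([X,Y]) = X\,d(Y) - Y\,d(X)$ using the $g$-module structure on $\bar{M}$; and $A$-linearity of $\phi_d$ together with the anchor-compatibility forces $d$ to be $A$-linear. Next I would identify the \emph{restricted} condition. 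Since $\phi_d$ must be a restricted Lie morphism, $\phi_d(X^{[p]}) = (\phi_d(X))^{[p]}$, and the $p$-map on $\bar{M}\rtimes_{p_{\bar{M}}} L$ is given (by the Remark $1.4$ formula underlying Theorem $3.3$) by $(\bar{m}+X)^{[p]} = \underbrace{X(\cdots(X}_{p-1}(\bar{m})))+X^{[p]}+p_{\bar{M}}(\bar{m})$. Computing both sides and projecting onto $\bar{M}$ gives precisely the Beck derivation condition $d(X^{[p]}) = \underbrace{X\cdots X}_{p-1}\,d(X) + (d(X))^{[p_{\bar{M}}]}$, which is the defining relation of $Der_p(g,M)$.

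The main obstacle I anticipate is the $p$-map computation: one must carefully evaluate $(\phi_d(X))^{[p]} = (d(X)+\beta(X))^{[p]}$ in the semidirect product and confirm that its $\bar{M}$-component is $\underbrace{\beta(X)\cdots\beta(X)}_{p-1}\,d(X) + p_{\bar{M}}(d(X))$, which matches the Beck derivation defining relation once the $g$-action on $\bar{M}$ is pulled back along $\beta$. The delicate point is that the $(p-1)$-fold iterated action term arises from the $\underbrace{X(\cdots(X}_{p-1}(\bar{m})))$ part of the $p$-map formula, and one must check the interplay of $d(X)$ with $\beta(X)$ acting through the restricted module structure; this is where the restricted module axiom $X^{[p]}m = \underbrace{X(\cdots(X}_{p}(m))$ from Definition $2.6$ is used. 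Finally I would confirm that $d \mapsto \phi_d$ and $\phi \mapsto d$ are mutually inverse (immediate from the formulas) and that the correspondence is additive: the group operation on $Hom_{p\text{-}\mathrm{LR}(A)/L}(g, I_L(M))$ induced by the abelian group object structure of $M$ corresponds under $\phi \mapsto d$ to pointwise addition of derivations, which is well defined since $p_{\bar{M}}$ is $p$-semilinear (as already noted in the definition of $Der_p(g,M)$). This establishes the claimed isomorphism of groups.
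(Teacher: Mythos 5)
Your proposal is correct and follows essentially the same route as the paper: the paper's proof sends $f$ to $d_f:=\pi f$ and a Beck derivation $d$ to $f_d(X):=d(X)+\gamma(X)$ with $\gamma$ the structural map, exactly your correspondence $\phi\mapsto d$, $d\mapsto\phi_d$. You merely spell out the verifications (bracket, $A$-linearity, and the $p$-map computation yielding the Beck derivation identity) that the paper leaves as ``we easily see.''
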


\begin{proof}
Let $f: g \rightarrow \bar{M}\rtimes_{p_{\bar{M}}} L$ and $\pi:
\bar{M}\rtimes_{p_{\bar{M}}} L \rightarrow \bar{M}$ be the canonical
projection. We easily see that, $d_{f}:=\pi f$ is a Beck derivation
and therefore, is defined a map $\Phi: f \mapsto d_{f}$. Moreover,
let $d: g \rightarrow \bar{M}$ be a Beck derivation we consider the
map $f_{d}: g\rightarrow \bar{M}\rtimes_{p_{\bar{M}}} L$ given by
$f_{d}(X):=d(X)+\gamma(X)$, where $X\in g$ and $\gamma: g\rightarrow
L$ is the structural map. The maps $\Psi: d \mapsto f_{d}$ and
$\Phi: f\mapsto d_{f}$ are inverse to each other.
\end{proof}

\section{Quillen-Barr-Beck cohomology for restricted Lie-Rinehart algebras}

There is a general theory of cohomology for univeral algebras due to
Quillen-Barr-Beck (see \cite{Qui}, \cite{Bar}, \cite{BB}). Moreover,
Quillen in \cite{Qui2} proves that the cohomology theory for
universal algebras defined in \cite{Qui}, is a special case of the
general definition of sheave cohomology due to Grothendieck.
Following the general scheme of Quillen-Barr-Beck cohomology theory
we define cohomology groups for the category of restricted
Lie-Rinehart algebras. By Proposition $2.8$ there is a functor
$$F : \mathrm{Vect}/Der(A) \rightarrow p-\rm{LR}(A)$$ left adjoint
to the functor
$$\mathcal{V}: p-\mathrm{LR}(A) \rightarrow \mathrm{Vect}/Der(A)$$

The adjoint pair $(F,\mathcal{V})$ induce a cotriple
$\mathbf{G}=(G_{*}, \epsilon, \delta)$ such that
$G_{*}:=F\mathcal{V}$. Thus, for all $L\in p$-$\rm{LR}(A)$ we obtain
a simplicial resolution $G_{*}L\rightarrow L$ known as cotriple
resolution or Godement resolution (see \cite{God}, \cite{BB}).

\begin{definition}
Let $L$ be a restricted Lie-Rinehart algebra and $M$ a Beck
$L$-module. Then, for  $n\in \mathbb{N}^{*}$, Quillen-Barr-Beck
cohomology groups are defined by the following formula
$$H_{p-\rm{LR}}^{n}(L,M):=H^{n}(Hom_{p-\rm{LR}(A)/L}(G_{*}L,M))$$
where in the right hand side of the formula $H$ denotes the
cohomology of a cosimplicial object.
\end{definition}

\begin{remark}
We observe that in degree $0$ we have
$H_{p-\rm{LR}}^{0}(L,M)=Der_{p}(L,M)$.
\end{remark}

\subsection{Cohomology in degree $1$ and extensions}

A principal bundle gives rise to \textit{Atiyah sequence} introduced
by Atiyah in \cite{Ati} (see for details \cite{Hue2}). The algebraic
analogue of "Atiyah sequence" is an extension of Lie-Rinehart
algebras. In this subsection we consider extensions of restricted
Lie-Rinehart algebras. We prove that the Quillen-Barr-Beck
cohomology in degree one classifies restricted Lie-Rinehart
extensions.

\begin{definition}
An extension $(e)$ of restricted Lie-Rinehart algebras (of $L$ by
$M$) is a short exact sequence of restricted Lie-Rinehart algebras
$$0\rightarrow M\rightarrow E \rightarrow L\rightarrow 0\;\;\;\;\; (e)$$
such that $[M,M]=0$.
\end{definition}

\begin{remark}

Let $L$ be a restricted Lie-Rinehart algebra over $A$ and $(e)$  an
extension of $L$ by $M$. Since $M$ is abelian a section $s:
L\rightarrow E$ defines a restricted Lie $L$-action on $M$.
Moreover, we easily see that $M$ becomes a $U_{p}(A,L)$-module.
Besides, there is an $A[P]$ action on $M$ such that
$Pm:=m^{[p_{M}]},\;\;m\in M$. Thus, we obtain that $M$ is endowed
with the structure of a $W(A,L)$-module.
\end{remark}

Two restricted Lie-Rinehart extensions $(e),(e')$ of $L$ by $M$ are
called equivalent if there is a restricted Lie-Rinehart isomorphism
$f: E\rightarrow E'$ such that the following diagram commutes

\[
\begin{CD}
0 @>>> M  @>>>  E @>>>  L @>>> 0   \\
 @.    @|   @VfVV @|  \\
 0 @>>> M  @>>>  E' @>>> L @>>> 0
\end{CD}
\]

We denote the set of equivalent classes by $Ext_{p}(L,M)$.

\textbf{Baer sum of restricted Lie-Rinehart algebras}. Let
$(e),(e')$ be two Lie algebra extensions of $L$ by $M$
$$0\rightarrow M \rightarrow E \xrightarrow{f} L\rightarrow 0$$ and
$$0\rightarrow M \rightarrow E' \xrightarrow{f'} L\rightarrow 0$$
Let $E\times_{L} E'=\{(e,e'),:f(e)=f'(e')\}$ be the pullback in
$Lie$. We denote by $I$ the ideal
$$I:=<\{(m,0)-(0,m'),\;\; m,m'\in M \}>$$ and we consider the Lie
algebra
$$Y:=E\times_{L} E'/I$$
The Baer sum of $(e)$ and $(e')$ is the extension of Lie algebras of
$L$ by $M$
$$0\rightarrow M \xrightarrow{\iota} Y \xrightarrow{\psi} L\rightarrow 0$$
where  $\iota(m):=\overline{(m,0)}$ and the last one
$\psi\big(\overline{(e,e')}\big):=f(e)=f(e')$. The set
$Ext_{Lie}(L,M)$ of equivalence classes of extensions of  Lie
algebras of $L$ by $M$ endowed with the operation of Baer sum is a
group. Moreover, let $(e),(e')$ be restricted Lie-Rinehart
extensions of $L$ by $M$. Then, $E\times_{L} E'$ is a restricted Lie
algebra with $p$-map given by $(e,e')^{[p]}=(e^{[p]},e'^{[p]})$. Let
$a\in A$ and $(X,Y)\in E\times_{L} E'$. Then, there is an action of
$A$ on $E\times_{L} E'$ given by $a(X,Y):=(aX,aY)$. Besides, there
is an action $E\times_{L} E' \rightarrow Der(A)$ given by
$(X,Y)(a):=X(a)=Y(a)$. The above actions endow $E\times_{L} E'$ with
the structure of a restricted Lie-Rinehart algebra. Moreover, the
ideal $I$ is a restricted Lie-Rinehart ideal. Thus, $Y$ is a
restricted Lie-Rinehart algebra. The Baer sum endows $Ext_{p}(L,M)$,
with the structure of a group and $Ext_{p}(L,M)$ is a subgroup of
$Ext_{Lie}(L,M)$.

\begin{theorem}
Let $L$ be a restricted Lie-Rinehart algebra over $A$ and $M$ a Beck
$L$-module. There is a bijection
\begin{equation}
H_{p-\rm{LR}}^{1}(L,M) \simeq Ext_{p}(L,M)
\end{equation}
\end{theorem}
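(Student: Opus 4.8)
The plan is to compute the degree-$\le 1$ part of the cotriple complex explicitly and then exhibit mutually inverse assignments between the two sides. The term of $G_{*}L$ in simplicial degree $n$ is $G_{*}^{n+1}L=(F\mathcal{V})^{n+1}L$; writing $\partial^{0},\partial^{1}$ for the first two coboundaries and $d_{0},d_{1},\dots$ for the face operators of $G_{*}L\to L$, the Beck-derivation identification of Proposition $3.7$ turns the relevant part of $\mathrm{Hom}_{p\text{-}\mathrm{LR}(A)/L}(G_{*}L,M)$ into
\[
Der_{p}(G_{*}L,M)\xrightarrow{\ \partial^{0}\ }Der_{p}(G_{*}^{2}L,M)\xrightarrow{\ \partial^{1}\ }Der_{p}(G_{*}^{3}L,M),
\]
with $\partial^{0}f=f d_{0}-f d_{1}$ and $\partial^{1}g=g d_{0}-g d_{1}+g d_{2}$. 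Throughout I present the Beck module as $M=\bar{M}\rtimes_{p_{\bar{M}}}L$ via Theorem $3.3$, so that a cochain is a Beck derivation into the $W(A,L)$-module $\bar{M}$. Thus $H_{p\text{-}\mathrm{LR}}^{1}(L,M)$ is the group of restricted Beck $1$-cocycles on $G_{*}^{2}L$ modulo the image of $\partial^{0}$, and the task is to match it with $Ext_{p}(L,M)$.

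To send an extension to a class, fix an extension $0\to\bar{M}\to E\xrightarrow{\pi}L\to0$ as in Definition $4.3$. Since $G_{*}L=F\mathcal{V}L$ is free and $\pi$ admits a $k$-linear section compatible with the anchors, the free--forgetful adjunction $(F,\mathcal{V})$ furnishes a restricted Lie-Rinehart morphism $h\colon G_{*}L\to E$ lifting the counit, i.e.\ $\pi h=\epsilon_{L}$. The two composites $hd_{0},hd_{1}\colon G_{*}^{2}L\to E$ project to the same morphism under $\pi$, because $\epsilon_{L}d_{0}=\epsilon_{L}d_{1}$; as $\bar{M}$ is abelian they therefore differ by a restricted Beck derivation $g\in Der_{p}(G_{*}^{2}L,M)$, and the simplicial identities give $\partial^{1}g=0$. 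Replacing $h$ by another lift changes $g$ by $\partial^{0}$ of their difference, and an equivalence $E\to E'$ carries a lift to a lift, so the class $[g]$ depends only on the equivalence class of the extension. This defines a map $\Theta\colon Ext_{p}(L,M)\to H_{p\text{-}\mathrm{LR}}^{1}(L,M)$.

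For the reverse assignment I use the canonical free presentation $0\to N\to G_{*}L\xrightarrow{\epsilon_{L}}L\to0$, in which the ideal $N=\ker\epsilon_{L}$ is generated by the elements $d_{0}(x)-d_{1}(x)$. A cocycle $g$ sends such a generator to $g(x)\in\bar{M}$, and the condition $\partial^{1}g=0$ is precisely what makes this a well-defined, $A$-linear, $G_{*}L$-equivariant and $p$-compatible morphism $\bar{g}\colon N\to\bar{M}$. I then let $E_{g}$ be the quotient of $\bar{M}\rtimes_{p_{\bar{M}}}G_{*}L$ --- a restricted Lie-Rinehart algebra by Proposition $3.5$ together with Remark $1.4$ --- by the restricted ideal $\{\bar{g}(n)-n:n\in N\}$; this $E_{g}$ sits in an extension $0\to\bar{M}\to E_{g}\to L\to0$. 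Coboundaries $\partial^{0}f$ produce split, hence trivial, extensions, so $[g]\mapsto[E_{g}]$ is well defined, and a direct comparison shows it is inverse to $\Theta$. Since both assignments are additive for the sum of cocycles and for the Baer sum, the bijection $(4.1)$ refines to an isomorphism for the Baer-sum group structure on $Ext_{p}(L,M)$.

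The main obstacle is the restricted, i.e.\ $p$-map, bookkeeping --- precisely the feature separating this statement from the classical Lie-Rinehart case, where the simplicial formalism already yields $H^{1}\simeq Ext$ by the general Barr--Beck mechanism. One must verify that the difference of two lifts and the reconstructed algebra $E_{g}$ carry a genuinely $p$-semilinear operation encoded exactly by the restricted Beck-derivation identity $d(X^{[p]})=\underbrace{X\cdots X}_{p-1}d(X)+(d(X))^{[p_{\bar{M}}]}$, and that the $A[P]$-action $Pm=m^{[p_{\bar{M}}]}$ of Remark $4.4$ --- equivalently the full $W(A,L)$-module structure on $\bar{M}$ --- is preserved at every stage. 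It is this refinement, rather than the underlying cotriple calculus, that forces the constructions of Section $3$ (Theorem $3.3$, Proposition $3.5$, and the $W(A,L)$-module description) into the argument.
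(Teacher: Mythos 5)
Your argument is correct in outline, but it follows a genuinely different route from the paper. The paper does not compute cocycles on the cotriple resolution at all: it invokes Duskin's torsor-theoretic interpretation of cotriple cohomology, taking as given the bijection $H^{1}_{p-\rm{LR}}(L,M)\simeq Tors_{p-\rm{LR}}(L,M)$, and then does the category-specific work of matching torsors with extensions --- constructing the action map $\omega(m+f(e),e)=m+e$ from an extension and verifying by direct computation that $\omega$ respects the $p$-map, and conversely extracting from a torsor structure an isomorphism $\bar{M}\simeq\ker f$ of restricted Lie $L$-modules and of $U_{p}(A,L)$-modules. (The group structure is then handled separately in Corollary $4.7$ via Duskin--Glenn and Vale's result for categories of interest.) You instead run the classical free-presentation/cocycle argument: lift the counit through $E$ using the adjunction, take the difference of the two faces to get a $1$-cocycle, and reconstruct $E_{g}$ as a quotient of $\bar{M}\rtimes_{p_{\bar{M}}}G_{*}L$ by the anti-graph of $\bar{g}$ on $N=\ker\epsilon_{L}$. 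What your approach buys is self-containedness --- no appeal to Duskin's machinery --- and it yields the group isomorphism in the same stroke via additivity against the Baer sum; what it costs is that the two genuinely delicate verifications are asserted rather than carried out: (i) that the difference of two restricted morphisms into $E$ agreeing modulo the abelian ideal $\bar{M}$ satisfies the restricted derivation identity $d(X^{[p]})=X^{p-1}d(X)+d(X)^{[p_{\bar{M}}]}$ (this follows from relation $(1.3)$ and the computation of $\sum s_{i}$ in the proof of Theorem $3.3$, and is exactly the point where Hochschild's $H^{2}$ fails and Beck modules succeed), and (ii) that $\bar{g}$ is well defined on all of $N$ from its values on the generators $d_{0}(x)-d_{1}(x)$, which requires knowing that $G_{*}^{2}L$ covers the kernel pair $G_{*}L\times_{L}G_{*}L$ --- the step that Duskin's torsor formalism is designed to package. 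Neither point is an error, but both should be written out for the argument to stand on its own.
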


\begin{proof}
Duskin in \cite{Du} develops the theory of torsors and gives an
interpretation of cotriple cohomology. There is a bijection between
the first cohomology group $H_{G}^{1}(L,M)$ and the set
$Tors_{p-LR}(L,M)$ of the isomorphism classes of objects $E\in
p$-$\rm{LR}(A)/L$ which are torsors for the abelian group object
$M$. An object $E\in p$-$\rm{LR}(A)/L$ is $M$ torsor, if
$E\rightarrow L$ is an epimorphism and there is a restricted
Lie-Rinehart morphism
$$\omega: (\bar{M}\rtimes_{p_{\bar{M}}} L)\times_{L} E\rightarrow E$$
such that the map $$(\omega, \pi):(\bar{M}\rtimes_{p_{\bar{M}}}
L)\times_{L} E\rightarrow E\times_{L} E$$ where $\pi$ denotes
projection, is a restricted Lie-Rinehart isomorphism.

Let $E\in Ext_{p}(L,M)$, then we have a short exact sequence in
$p$-$\rm{LR}(A)$
$$0\rightarrow M\rightarrow E\xrightarrow{f} L\rightarrow 0$$
such that $[M,M]=0$ and the induced $W(A,L)$-action on $M$, recovers
the given $W(A,L)$-action on $M$. We define a map $$\omega:
(\bar{M}\rtimes_{p_{\bar{M}}} L)\times_{L} E\rightarrow E$$ given by
$\omega (m+f(e),e):=m+e$, for all $m\in \bar{M}$ and $e\in E$. We
easily see that, $\omega$ is a Lie-Rinehart homomorphism. Moreover,
we have
\begin{align*}
\omega((m+f(e),e)^{[p]}) &=\omega((m+f(e))^{[p]},e^{[p]})\\
                 &\omega(f(e)^{p-1}m+m^{[p_{\bar{M}}]}+(f(e))^{[p]},e^{[p]})\\
                 &=\omega(e^{p-1}m+ m^{[p_{\bar{M}}]}+f(e^{[p]}),e^{[p]})\\
                 &=e^{p-1}m+ m^{[p_{\bar{M}}]}+e^{[p]}\\
                 &=(m+e)^{[p]}\\
                 &= (\omega(m+f(e),e))^{[p]}
\end{align*}
Thus, we get that $\omega$ is a restricted Lie-Rinehart homomorphism. It is easy to check that $$(\omega,
\pi):(\bar{M}\rtimes_{p_{\bar{M}}} L)\times_{L} E\rightarrow
E\times_{L} E$$ is a restricted Lie-Rinehart isomorphism.
Therefore, $E\rightarrow L$ is torsor for $M$.

Conversely, let $E\in p$-$\rm{LR}(A)/L$ be an object torsor for
$\bar{M}\rtimes_{p_{\bar{M}}} L\rightarrow L$. Then, the structural
map $f: E\rightarrow L$, is a restricted Lie-Rinehart epimorphism.
If $K:=ker\,f$, then there is an injection $i: K \hookrightarrow
E\times_{L} E$ with $i(k):=(k,0)$. Besides, there is an injection
$j: \bar{M} \hookrightarrow (\bar{M}\rtimes_{p_{\bar{M}}}
L)\times_{L}E$ given by $j(\bar{m}):=((\bar{m},0),0)$. The
restriction of the restricted Lie-Rinehaert isomorphism
$(\omega,\pi)$ on $\bar{M}$ implies an isomorphism of restricted
Lie-Rinehart algebras $(A,\bar{M},(-)^{[p_{\bar{M}}]})\simeq
(A,K,(-)^{[p_{k}]})$. Moreover, let $X\in L,\;\;m\in \bar{M}$ and
$e\in E$ such that $f(e)=X$. Then, we have
\begin{align*}
\omega(Xm,0) &=\omega ([(X,e),(m,0)])  \\
                 &=[\omega(X,e),\omega(m,0)]
\end{align*}
Besides, we have $$(\omega,\pi)((X,e),e)=(\omega(X,e),e)$$ it
follows that $f(\omega(X,e))=X$. Therefore, $\bar{M}$ and $K$ are
isomorphic as restricted Lie $L$-modules. Since $\omega$ is a
restricted Lie-Rinehart homomorphism, it follows that, $\bar{M}$ and
$K$ are isomorphic as $U_{p}(A,L)$-modules. Thus, we get an
extension of restricted Lie-Rinehart algebras
 $$0\rightarrow M\rightarrow E
\rightarrow L\rightarrow 0$$ Therefore, there is a bijection of sets
$H_{p-\rm{LR}}^{1}(L,M) \simeq Ext_{p}(L,M)$.
\end{proof}

\begin{remark}
We note that, in terms of Quillen-Barr-Beck cohomology, there is a
shift in the dimension, in compare to the classical notation,
concerning classification theorems of extensions.
\end{remark}

If $L$ is a Lie algebra and $M$ a $\mathcal{U}(L)$-module, then
cotriple cohomology groups
$H^{*}_{Lie}(L,M):=H^{*}(Hom_{Lie}(G_{Lie}(L),M))$ are defined (see
\cite{Bar}). The cotriple which is used is given by
$G_{Lie}:=L\mathcal{F}$ where $L: Vect\rightarrow Lie$ is the free
Lie algebra functor left adjoint to the forgetful functor
$\mathcal{F}: Lie \rightarrow Vect$.

\begin{corollary}
There is an isomorphism of groups
$$H_{p-\rm{LR}}^{1}(L,M) \simeq Ext_{p}(L,M)$$
\end{corollary}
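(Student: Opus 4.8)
The plan is to promote the set bijection of the preceding theorem to a group isomorphism; since it is already a bijection, it suffices to show that it is additive, for then it automatically preserves the neutral element and inverses. The addition on $H_{p-\rm{LR}}^{1}(L,M)$ is the canonical one on cotriple cohomology: it is the first cohomology group of the cosimplicial abelian group $Hom_{p-\rm{LR}(A)/L}(G_{*}L,M)$, with group law induced by the abelian group object structure of $M$, equivalently by the $W(A,L)$-module addition on $\bar{M}$. The addition on $Ext_{p}(L,M)$ is the Baer sum introduced above, under which $Ext_{p}(L,M)$ sits as a subgroup of $Ext_{Lie}(L,M)$.

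First I would invoke Duskin's theorem \cite{Du}, already used in the proof of the preceding theorem, in its strong form: the identification $H_{p-\rm{LR}}^{1}(L,M)\simeq Tors_{p-LR}(L,M)$ is not merely a bijection but an isomorphism of abelian groups, the addition on $M$-torsors being their contracted product. It then remains to check that the dictionary between torsors and extensions built in the proof of the preceding theorem carries the contracted product of torsors to the Baer sum of restricted Lie-Rinehart extensions. I would unwind both operations on the common model $Y=E\times_{L}E'/I$: the contracted product of the torsors attached to $(e)$ and $(e')$ is computed from the pullback $E\times_{L}E'$ after collapsing the ideal $I=\langle(m,0)-(0,m')\rangle$, which is precisely the object used above to define the Baer sum. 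Matching the structure map $\omega$ on $(\bar{M}\rtimes_{p_{\bar{M}}}L)\times_{L}E$ with its counterpart on $Y$ then identifies the contracted-product torsor with the Baer-sum extension.

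The main obstacle is this compatibility at the level of the restricted structure. It is, however, supplied by the verification recorded before the preceding theorem: $E\times_{L}E'$ is a restricted Lie-Rinehart algebra with diagonal $p$-map $(e,e')^{[p]}=(e^{[p]},e'^{[p]})$, the anchor and the $A$-action descend, and $I$ is a restricted Lie-Rinehart ideal, so passing to $Y$ commutes with torsor addition and the restricted structure on the Baer sum agrees with the one transported from the contracted product. Everything else is the routine observation that both additive structures are induced by the single $W(A,L)$-module $\bar{M}$; hence once the underlying extensions are matched the bijection of the preceding theorem is additive, and therefore an isomorphism of groups.
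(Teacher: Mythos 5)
Your argument is correct in outline, but it takes a genuinely different route from the paper at the decisive step. Both you and the paper begin with Duskin's and Glen's result that $H^{1}_{p-\rm{LR}}(L,M)\simeq Tors_{p-\rm{LR}}(L,M)$ is an isomorphism of \emph{groups}, the addition on torsors being the contracted product. From there you propose to verify directly, inside $p$-$\rm{LR}(A)$, that the torsor--extension dictionary of the preceding theorem carries the contracted product to the Baer sum, by identifying both operations with the quotient $Y=E\times_{L}E'/I$ and using the already-recorded facts that $E\times_{L}E'$ carries the diagonal $p$-map and that $I$ is a restricted Lie-Rinehart ideal. The paper instead sidesteps this verification: it observes that Lie algebras form a category of interest in the sense of Orzech, invokes Vale's general theorem to obtain a group isomorphism $Tors_{Lie}(L,M)\simeq Ext_{Lie}(L,M)$, and then uses the natural embedding $H^{1}_{p-\rm{LR}}(L,M)\hookrightarrow H^{1}_{Lie}(L,M)$ together with the fact that $Ext_{p}(L,M)$ is a subgroup of $Ext_{Lie}(L,M)$ to conclude, via a commutative diagram, that the bijection of the preceding theorem is the restriction of a group isomorphism and hence itself one. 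Your route is more self-contained and never leaves the restricted category, at the price of actually carrying out the (routine but not entirely trivial) check that the contracted product of the two torsors is computed by collapsing $I$ and that the structure map $\omega$ descends to $Y$ compatibly with the $p$-maps; the paper's route outsources exactly that compatibility to Vale's theorem in the unrestricted setting, but in exchange must justify the naturality of the comparison maps between the restricted and unrestricted theories. If you flesh out the descent of $\omega$ to $Y$ explicitly, your argument stands on its own.
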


\begin{proof}
Duskin in \cite{Du},\cite{Du2} and Glen in \cite{Glen}, develop the
theory of torsors. In particular, is proved (see Section $4$ in
\cite{Du2} and \cite{Glen}) that the set of torsors can be endowed
with the structure of a group. It follows from the general theory
(see Section 5 in \cite{Du2} and \cite{Glen}) that, there is a group
isomorphism
$$H^{1}_{p-\rm{LR}}(L,M) \simeq Tors_{p-\rm{LR}}(L,M)$$ and
respectively for the category Lie algebras $$H^{1}_{Lie}(L,M) \simeq
Tors_{Lie}(L,M)$$

Besides, the category of Lie algebras is a category of interest in
sense of Orzech (see \cite{Or}). Therefore by a general result of
Vale in \cite{Vale}, we obtain a group isomorphism
$Ext_{Lie}(L,M)\simeq Tors_{Lie}(L,M)$ . If $L\in p-\rm{LR}(A)$ and
$M$ a $W(L)$-module, then there is a natural embedding
$H^{1}_{p-\rm{LR}}(L,M)\hookrightarrow H^{1}_{Lie}(L,M)$.

Thus, we have the following commutative diagram

\xymatrix{
H^{1}_{p-\rm{LR}}(L,M) \ar@{^{(}->}[d]  \ar[r]^{\backsim}  & Tors_{p-\rm{LR}}(L,M)  \ar@{^{(}->}[d]  \ar[r]  & Ext_{p}(L,M) \ar@{^{(}->}[d]    \\
H^{1}_{Lie}(L,M)  \ar[r]^{\backsim} &  Tors_{Lie}(L,M)
\ar[r]^{\backsim} & Ext_{Lie}(L,M)}

 Therefore, the bijection $(4.1)$
$$H_{p-\rm{LR}}^{1}(L,M) \simeq Ext_{p}(L,M)$$ is an isomorphism of
groups.
\end{proof}

\section{Brauer group and cohomology}

The problem of classification of finite-dimensional central simple
algebras is related to the notion of Brauer group. The theory of
Brauer groups has strong ties with number theory, algebraic geometry
(see \cite{Ser2}, Local fields) and algebraic $k$-theory \cite{Ker}.
In connection with Galois theory we have that if $E/F$ is a Galois
extension then the relative Brauer group $\mathcal{B}_{F}^{E}$ is
isomorphic with the group of equivalence classes of group extensions
of the Galois group $Gal(E/F)$ by the multiplicative group $F^{*}$
of $F$. Moreover, we have an isomorphism of groups relating the
Galois cohomology and the Brauer group:
\begin{equation}
\mathcal{B}_{F}^{E}\simeq H^{2}(Gal(E/F),F^{*})
\end{equation}

In the context of purely inseparable extension of exponent $1$ there
is a Galois theory due to Jacobson (see \cite{Jac2}). The role of
Galois group is now played by the group of derivations. In
particular, let $K/k$ be a finite purely inseparable extension of
exponent $1$ and $A$ a finite dimensional algebra with center $k$
which contains $K$ as a maximal commutative subring. Let $S:=\{s\in
A,| D_{s}(K)\subset K\}$, where $D_{s}$ denotes the derivation
$D_{s}(a):=sa-as$. Then, Hochschild in \cite{Hoch2} considers
particular class of restricted Lie algebra extensions  of
$Der_{k}(K)$ by $K$
$$0\rightarrow K\rightarrow S\rightarrow Der_{k}(K)\rightarrow 0$$
called \textit{regular extensions}. A regular extension is nothing
more than a restricted Lie-Rinehart extension of the restricted
Lie-Rinehart algebra $Der_{k}(K)$ by $K$. We note that $K$ is an
abelian restricted Lie algebra over $k$ with its natural $p$-map
(see \cite{Hoch2} for details). Moreover, Hochschild in \cite{Hoch2}
proves that there is an isomorphism between the Brauer group
$\mathcal{B}_{k}^{K}$ and the set of equivalence classes of regular
restricted Lie algebra extensions of $Der_{k}(K)$ by $K$.

As motivation to carry out this research was to establish an
isomorphism in terms of cohomology of restricted Lie-Rinehart
algebras analogue to the isomorphism $(5.1)$ in terms of Galois
cohomology. Hochschild in \cite{Hoch} defined cohomology groups
$H_{Hoch}^{*}:=H^{*}(L,M)$ where $L\in RLie$ and $M$ a restricted
Lie module. Since the second cohomology group $H^{2}_{Hoch}(L,M)$
classifies extensions where $M$ is strongly abelian, i.e such that
$M^{[p]}=0$ we do not have an isomorphism of the Brauer group
$\mathcal{B}_{k}^{K}$ with a sub group of
$H^{2}_{Hoch}(Der_{k}(K),K)$. Nevertheless, using the
Quillen-Barr-Beck cohomology for $p$-$\rm{LR}(A)$ we can establish
the analogue isomorphism to $(5.1)$.

\begin{theorem}
Let $K/k$ be a finite purely inseparable extension of exponent $1$.
Then, we have the following isomorphism of groups
$$\mathcal{B}_{k}^{K}\simeq H_{p-\rm{LR}}^{1}(Der_{k}(K),K)$$
\end{theorem}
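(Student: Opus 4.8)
The plan is to obtain the isomorphism as a composite of two facts that are already available: the group isomorphism $H_{p-\rm{LR}}^{1}(L,M)\simeq Ext_{p}(L,M)$ of the preceding corollary, applied to $L=Der_{k}(K)$ and $M=K$, together with Hochschild's theorem identifying $\mathcal{B}_{k}^{K}$ with the group of equivalence classes of regular extensions of $Der_{k}(K)$ by $K$. Concretely, I would assemble the chain
$$\mathcal{B}_{k}^{K}\simeq\{\text{regular extensions}\}=Ext_{p}(Der_{k}(K),K)\simeq H_{p-\rm{LR}}^{1}(Der_{k}(K),K),$$
so that the entire substance of the argument lives in the middle equality, namely that Hochschild's regular restricted Lie algebra extensions are exactly the restricted Lie-Rinehart extensions of $Der_{k}(K)$ by $K$, and that the two group laws coincide.

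First I would pin down the Beck module structure on $K$. As a restricted Lie-Rinehart $Der_{k}(K)$-module, $K$ carries the tautological action $D\cdot a:=D(a)$ together with the Frobenius map $a\mapsto a^{p}$ as its $p$-map. Since $K/k$ has exponent $1$, every $a^{p}$ lies in $k$, and by Jacobson's correspondence (Example 1.12) the full $Der_{k}(K)$ corresponds to $k$, so $k=K^{Der_{k}(K)}$ is precisely the invariant submodule; thus the Frobenius is the required $p$-semilinear map $p_{\bar{M}}\colon K\rightarrow K^{Der_{k}(K)}$. Through the equivalence between Beck $L$-modules and $W(A,L)$-modules this data is a genuine Beck module, and it is exactly the nonvanishing of this $p$-map that excludes the problem from $H^{2}_{Hoch}$ and places it inside $p$-$\rm{LR}(K)$. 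With $M=K$ fixed, an element of $Ext_{p}(Der_{k}(K),K)$ is a short exact sequence $0\rightarrow K\rightarrow E\rightarrow Der_{k}(K)\rightarrow 0$ of restricted Lie-Rinehart algebras over $K$ with $[K,K]=0$ inducing this module structure; unwinding the definitions, this is precisely a regular extension, with $[K,K]=0$ encoding that $K$ is abelian as a restricted Lie algebra and the $A=K$-module axiom together with relation $(1.5)$ encoding Hochschild's regularity on the interaction of the $p$-map with scalar multiplication.

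The remaining point is that the group operations match. I would compare Hochschild's operation on regular extensions with the Baer sum that furnishes $Ext_{p}(Der_{k}(K),K)$ with its group structure, checking that both are computed by the pullback over $Der_{k}(K)$ followed by pushout along addition in $K$, so that the set-theoretic identification of the middle equality upgrades to an isomorphism of groups. Feeding this through the corollary's group isomorphism then yields the stated result.

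The step I expect to be the main obstacle is precisely this middle identification together with the compatibility of group laws: one must verify term by term that Hochschild's regularity condition on $0\rightarrow K\rightarrow S\rightarrow Der_{k}(K)\rightarrow 0$ translates into the restricted Lie-Rinehart axioms over $A=K$, in particular that the $K$-module structure on $S$ and the formula for the $p$-map of a scaled element reproduce $(1.5)$, and that Hochschild's group law on regular extensions is genuinely the Baer sum rather than merely abstractly isomorphic to it. Once these verifications are complete, the theorem follows by invoking the corollary and Hochschild's theorem.
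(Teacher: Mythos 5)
Your proposal follows essentially the same route as the paper: the paper's proof is exactly the composite of Corollary 4.7 with Hochschild's identification of $\mathcal{B}_{k}^{K}$ with regular extensions, having already asserted earlier in Section 5 that a regular extension is nothing more than a restricted Lie-Rinehart extension of $Der_{k}(K)$ by $K$. Your additional care about the Beck module structure on $K$ and the compatibility of the group laws is a reasonable elaboration of details the paper leaves implicit, but it is not a different argument.
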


\begin{proof}
The proof follows from Corollary $(4.7)$ and the isomorphism proved
by Hochschild in \cite{Hoch2} between the Brauer group
$\mathcal{B}_{k}^{K}$ and the group of equivalence classes of
\textit{regular} extensions of $Der_{k}(K)$ by $K$.
\end{proof}

\begin{remark}
Given a commutative ring $C$ and a commutative $C$-algebra
$\mathcal{A}$, Amutsur in \cite{Am} defined cohomology groups
$H_{Am}^{*}(\mathcal{A})$. Let $K$ be a purely inseparable extension
field of $k$ of exponent $1$. Then, Rosenberg and Zelinsky in
Section $4$ in \cite{Ros} exhibit an isomorphism between
$H_{Am}^{2}(K)$ and the Brauer group $\mathcal{B}_{k}^{K}$.
Therefore, by the above theorem we get an isomorphism
$$H_{Am}^{2}(K) \simeq H_{p-\rm{LR}}^{1}(Der_{k}(K),K)$$
\end{remark}

\bibliographystyle{plain}

\end{document}